\newcommand{\R}{\textnormal{I\kern-0.21emR}}
\newcommand{\N}{\textnormal{I\kern-0.21emN}}
\newcommand{\T}{\mathbb{T}}
\newcommand{\uu}{{u_{y}}}
\newcommand{\duy}{{\dot{u}_{y}}}
\newcommand{\dKuu}{{\dot{v}_{K,y^*}}}
\newcommand\e{{\varepsilon}}
\newtheorem*{theorem*}{Theorem}
\newtheorem{theorem}{Theorem}
\newtheorem{material}{material}
\newtheorem{proposition}[material]{Proposition}
\newtheorem{corollary}[material]{Corollary}
\newtheorem{lemma}[material]{Lemma}
\newtheorem{remark}[material]{Remark}
\def\O{{\Omega}}
\def\n{{\nabla}}
\def\p{{\varphi}}
\def\OT{{(0;T)\times \O}}
 \newcommandx{\christian}[2][1=]{\todo[linecolor=red,backgroundcolor=red!25,bordercolor=red,#1]{#2}}
 \newcommandx{\laura}[2][1=]{\todo[linecolor=blue,backgroundcolor=blue!25,bordercolor=blue,#1]{#2}}
 \newcommandx{\info}[2][1=]{\todo[linecolor=green,backgroundcolor=green!25,bordercolor=green,#1]{#2}}
 \newcommandx{\improvement}[2][1=]{\todo[linecolor=yellow,backgroundcolor=yellow!25,bordercolor=yellow,#1]{#2}}
  \newcommandx{\biblio}[2][1=]{\todo[linecolor=blue,backgroundcolor=magenta!25,bordercolor=blue,#1]{#2}}
 \numberwithin{equation}{section}
\begin{document}

\title{Localising optimality conditions for the linear optimal control of semilinear equations  \emph{via}  concentration results for oscillating solutions of linear parabolic equations}


\author{Idriss Mazari-Fouquer, Gr\'egoire Nadin}

\date{ \today}

\maketitle
\begin{abstract}
We propose a fine analysis of second order optimality conditions for the optimal control of semi-linear parabolic equations with respect to the initial condition. More precisely, we investigate the following problem: maximise with respect to $y\in L^\infty(\OT)$ the cost functional $J(y)=\iint_\OT j_1(t,x,u)+\int_\O j_2(x,u(T,\cdot))$ where $\partial_t u-\Delta u=f(t,x,u)+y\,, u(0,\cdot)=u_0$ with some classical boundary conditions, under  constraints of the form $-\kappa_0\leq  y\leq \kappa_1\text{ a.e.}\,, \int_\O y(t,\cdot)=V_0$. This class of problems arises in several application fields. A challenging feature of these problems is the study of the so-called abnormal set $ \{-\kappa_0<y^*<\kappa_1\}$ where $y^*$ is an optimiser. This set is in general non-empty and it is important (for instance for numerical applications) to understand the behaviour of $y^*$ in this set: which values can $ y^*$ take? In this paper, we introduce a Laplace-type method to provide some answers to this question. This Laplace type method is of independent interest.
\end{abstract}

\noindent\textbf{Keywords:} Reaction-diffusion equation, semi-linear parabolic equation, optimal control, second order optimality conditions, shape optimisation, two-scale expansions.

\medskip

\noindent\textbf{AMS classification:} 35Q92,49J99,34B15.

\paragraph{Acknowledgement:} The authors were partially supported by the Project ”Analysis and simulation of optimal shapes - application to life sciences” of the Paris City Hall. I. M-F was partially supported by  the French ANR Project ANR-18- CE40-0013 - SHAPO on Shape Optimization.

\section{Introduction and main result}

\subsection{Scope and objective of the article}
An ubiquitous query in PDE constrained optimisation is the \emph{optimisation of a linear source term in parabolic models}. While several works  \cite{CasasT, RaymondT,RaymondZidani, RT} tackle the delicate issue of analysing second (and first) order optimality conditions under a wide class of constraints and penalisations, these works often fail to offer conclusive information in the context of $L^\infty-L^1$ constrained control problems. These type of constraints arise naturally in the context of population dynamics \cite{Lou2008}, and the recent activity in the analysis of these optimal control problems, whether it be in the elliptic \cite{MNP2021} or in the parabolic setting \cite{Abdul_Halim_2022,Elie}, has underlined the intrinsic mathematical challenges of these queries. While previous works are discussed in section \ref{Se:Prior} let us mention here that, in the present paper, we consider a general optimisation problem for heterogeneous semi-linear parabolic equations.  

The main difficulty in this endeavour is that the optimality conditions typically involve the use of an adjoint state, defined as the solution of a (backward) parabolic equation on the entire space-time domain. However, it is often desirable to obtain a pointwise  (in time and in space) information, so that \emph{localising} the optimality conditions is a worthy but intricate endeavour. The method we propose here leads to such a   \emph{localisation} of these optimality conditions, and provides unexpected results.

Besides being relevant  for  the numerical approximation of such optimal control problems \cite{Mazari_2021}, our results shed a new light on the qualitative properties of solutions of linear optimal control of semi-linear models. Furthermore, in exploiting these optimality conditions, we develop a \emph{Laplace-type method} that deals with the limit behaviour of solutions to linear parabolic equations when the initial condition is a sum  of highly oscillating frequencies, and yields a concentration-type result. This contribution is related to \emph{two-scale asymptotic expansions}. What is notable here is that we prove a result that does not assume a scale separation, unlike what is usually done in this context \cite{AllaireBriane}.

\subsection{State equation}
Throughout the paper, $\O\subset \R^d$ is a bounded open set with a $\mathscr C^2$ boundary. We choose a boundary condition operator $\mathcal B$ that is of the following form:
\[\mathcal B:u\mapsto \begin{cases} u&\text{(Dirichlet case)}
\\ \frac{\partial u}{\partial \nu}&\text{(Neumann case)}.\end{cases}\]
We work with a non-linearity $f=f(t,x,u)$ that satisfie
\begin{equation}\tag{$\bold{H}_f$}\label{Hyp:f}
\begin{cases}
\text{ $f$ is $\mathscr C^3$ on $[0;T]\times \overline \O\times \R$}
\\
\exists M>0\,, \forall u\geq M\,, \forall (t,x)\in [0;T]\times \O\,, f(t,x,u)\leq 0\,, f(t,x,-u)\geq 0.\\
\end{cases}
\end{equation}
 For any initial condition $u_0\in L^\infty(\O)$ and any source term $y\in L^1(0,T;L^1(\O))\cap L^\infty(\OT)$, we define $u_{y}$ as the solution of 
\begin{equation}\label{Eq:Main}
\begin{cases}
\partial_tu_{y}-\Delta u_{y}=f(t,x,u_{y})+y&\text{ in }\OT\,, 
\\ \mathcal Bu_{y}=0&\text{ on }(0;T)\times \partial \O\,, 
\\ u_{y}(0,\cdot)=u_0&\text{ in }\O,\end{cases}\end{equation}
where $T>0$ is a fixed time horizon. By the standard theory for non-linear parabolic equations \cite{Lieberman}, for any initial condition $u_0\in L^\infty(\O)$ and any source $y\in L^1(0,T;L^1(\O))\cap L^\infty(\OT)$, there exists a unique solution $u_y$ to \eqref{Eq:Main}.

Our goal is to optimise a fairly general class of criteria with respect to the source term $y$. It should be noted that \eqref{Hyp:f} is a loose enough set of technical assumptions to cover classical reaction terms of the form $f(t,x,u)=u(m(t,x)-u)$ where $m$ is a smooth function, which corresponds to monostable models, or $f(t,x,u)=u(u-\theta(t,x))(1-u)$, which models the Allee effect.

\subsection{Setting of the optimal control problem}
\paragraph{Cost functional}
 We fix two cost functions $j_1=j_1(t,x,u)\,, j_2=j_2(x,u)$ and define 
 \[ J:L^\infty(\OT)\ni y\mapsto \iint_\OT j_1\left(t,x,\uu(t,x)\right)dxdt+\int_\O j_2\left(x,\uu(T,x)\right)dx.\] This is the functional that is to be optimised, and we thus need to define the class of admissible controls we work with. As is often the case in applications \cite{Mazari_2021}, we enforce two constraints, an $L^\infty$ one and an $L^1$ one. In other words we consider three constants $0\leq\kappa_0\,, \kappa_1$ and $V_0\in (0;1)$, and we define the class of admissible controls  as 
 \begin{multline}\label{Eq:AdmY}\tag{$\bold{Adm}$}\mathcal Y:=\left\{ y\in L^1(0,T;L^1(\O))\,, -\kappa_0\leq y\leq \kappa_1\text{ almost everywhere in $\OT$ }\right.\\ \left.\text{ and, for almost every $t\in (0;T)$, }\fint_\O y(t,\cdot)=V_0\right\}.\end{multline}The symbol $\fint$ denotes the mean value of a function: $\fint_\O f=\frac1{\mathrm{Vol}(\O)}\int_\O f$.

The optimisation problem under consideration is: \begin{equation}\tag{$\bold{P}$}\label{Eq:PvIntro2}\fbox{$\displaystyle\max_{y\in \mathcal Y}\,J(y)$}\end{equation}

\paragraph{Regularity assumptions}
We work under the following assumptions on the cost functions $j_1\,, j_2$:
\begin{equation}\tag{$\bold{H}_{\mathrm{reg}}$}\label{Hyp:j}
j_1,j_2 \text{ are $\mathscr C^2$ in $[0;T]\times \overline \O\times \R$}.
\end{equation} 
Natural examples in the field of population dynamics would be $f=f(u)=u(u-\theta)(1-u)$ (bistable nonlinearity), $j_1=0$ and $j_2= u$, which corresponds to optimising a proportion of sane mosquitoes within a global population \cite{Almeida_2022,Mazari_2021}. Also note that the regularity assumptions on $j_1\,, j_2$ are far from minimal, and could be relaxed to being measurable in $x$ only. For the sake of readability, we describe our results under the stronger assumption \eqref{Hyp:j}.

 \paragraph{Optimality conditions for \eqref{Eq:PvIntro2}} 
 Let us describe the optimality conditions and adjoint state for \eqref{Eq:PvIntro2};  we have the following lemma, easily obtained from adapting \cite[Lemma 3]{Nadin_2020}:
\begin{lemma}\label{Le:DifferentiabilityK}
Under the assumptions \eqref{Hyp:f}-\eqref{Hyp:j}, the control-to-state map $T:\mathcal Y\ni y\mapsto u_y$ is twice Gateaux-differentiable at $y$. For any $y\in \mathcal Y$, for any  perturbation $h\in L^\infty(\OT)$, the first order Gateaux-derivative of the functional $J$ at $y$ in the direction $h$  is given by
\begin{equation}\label{Eq:DK1}\dot J(y)[h]=\iint_\OT p_y(t,x)h(t,x)dxdt\end{equation}
where $p_y$ is the solution of the backwards equation
\begin{equation}\label{Eq:AdjointK}
\begin{cases}
\partial_t p_y+\Delta p_y=-\partial_u j_1(t,x,u_y)-\partial_uf(t,x,u_y)p_y&\text{ in }(0;T)\times \O\,, \\
\mathcal Bp_y=0&\text{ on }(0;T)\times \partial \O\,, 
\\ p_y(T,\cdot)=\frac{\partial j_2}{\partial u}(x,u_y)&\text{ on }\partial\O.
\end{cases}
\end{equation}
Similarly, the second order Gateaux-derivative of the functional $J$ at $y$ in the direction $(h,h)$  is given by 
\begin{equation}\label{Eq:DDK2}
\ddot J(y)[h,h]=\iint_{(0;T)\times \O}\duy^2\left( p_y\frac{\partial^2 f}{\partial u^2}(t,x,u_y)
+\frac{\partial^2 j_1}{\partial u^2}(t,x,u_y)\right)dtdx+\int_\O \duy^2(T,x)\frac{\partial^2 j_2}{\partial u^2}(x,u_y)dx, 
\end{equation}
where $\duy$ is the unique solution of the linearised system
\begin{equation}\label{Eq:DotuK}\begin{cases}
\partial_t\duy-\Delta\duy=h+\partial_uf(t,x,u_y)\dot u_y&\text{ in }(0;T)\times \O\,, \\
 \mathcal B\duy=0&\text{ on }(0;T)\times \partial \O\,, \\ 
 \duy(0,\cdot)=0&\text{ in }\O.\end{cases}
\end{equation}
\end{lemma}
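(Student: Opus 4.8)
The plan is to establish all three formulae by differentiating the state equation \eqref{Eq:Main} with respect to the source term and then using the adjoint state to rewrite the derivatives of $J$ in the stated form; this is the classical Lagrangian/adjoint calculus, and the only genuine work is the justification of the differentiability of the control-to-state map $T:y\mapsto u_y$, which is why the lemma refers to an adaptation of \cite[Lemma 3]{Nadin_2020}. First I would fix $y\in\mathcal Y$ and $h\in L^\infty(\OT)$, and for small $s$ consider $u_{y+sh}$. Using the parabolic regularity theory of \cite{Lieberman} together with the sign condition in \eqref{Hyp:f} (which yields an $L^\infty$ a priori bound on $u_y$ uniform for $y$ in a bounded set of $L^\infty(\OT)$), one shows that $s\mapsto u_{y+sh}$ is well-defined for $s$ in a neighbourhood of $0$ and that the incremental quotients $(u_{y+sh}-u_y)/s$ converge, in a suitable parabolic Hölder or $W^{2,1}_p$ space, to a limit $\dot u_y$ solving the linearised problem \eqref{Eq:DotuK}. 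The key point is that $f$ is $\mathscr C^3$, so $\partial_u f(t,x,u_y)$ is a bounded coefficient and the linearised equation is a well-posed linear parabolic Cauchy problem; uniqueness of $\dot u_y$ follows from the maximum principle / energy estimates. Iterating the argument once more and differentiating the equation satisfied by the difference quotients gives the second-order Gateaux derivative: $\ddot u_y$ solves the same linear equation as $\dot u_y$ but with source $\partial_u f(t,x,u_y)\ddot u_y + \partial_{uu}f(t,x,u_y)\dot u_y^2$, and in particular $s\mapsto u_{y+sh}$ is twice differentiable.

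Next I would differentiate $J$. Writing $J(y+sh)=\iint_\OT j_1(t,x,u_{y+sh})+\int_\O j_2(x,u_{y+sh}(T,\cdot))$ and differentiating under the integral sign (legitimate by the uniform $L^\infty$ bounds and the $\mathscr C^2$ regularity \eqref{Hyp:j}), one gets
\[
\dot J(y)[h]=\iint_\OT \partial_u j_1(t,x,u_y)\,\dot u_y\,dxdt+\int_\O \partial_u j_2(x,u_y(T,\cdot))\,\dot u_y(T,\cdot)\,dx.
\]
To turn this into \eqref{Eq:DK1} I introduce the adjoint state $p_y$ defined by the backward problem \eqref{Eq:AdjointK}, which is a well-posed linear backward parabolic equation (reverse time, run it as a forward problem in $\tau=T-t$). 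Then I multiply the equation for $\dot u_y$ by $p_y$, integrate over $\OT$, integrate by parts in space and time, and use the boundary and terminal conditions: the $\Delta$ terms cancel by Green's formula and the $\mathcal B$-boundary conditions on both $\dot u_y$ and $p_y$; the time integration by parts produces the boundary term $\int_\O p_y(T,\cdot)\dot u_y(T,\cdot)=\int_\O \partial_u j_2(x,u_y)\dot u_y(T,\cdot)$ (the term at $t=0$ vanishes since $\dot u_y(0,\cdot)=0$); and the zeroth-order terms $\partial_u f\cdot p_y$ cancel against the corresponding term coming from the equation for $\dot u_y$. What remains is precisely $\iint_\OT p_y h + \iint_\OT \partial_u j_1(t,x,u_y)\dot u_y$ on one side balancing $\int_\O \partial_u j_2 \dot u_y(T,\cdot)$, i.e. $\iint_\OT p_y h = \dot J(y)[h]$, which is \eqref{Eq:DK1}.

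For the second-order formula \eqref{Eq:DDK2}, I differentiate $\dot J(y+sh)[h]$ once more in $s$ at $s=0$. Differentiating $\iint_\OT \partial_u j_1(t,x,u_{y+sh})\dot u_{y+sh}$ and $\int_\O \partial_u j_2 (x,u_{y+sh}(T,\cdot))\dot u_{y+sh}(T,\cdot)$ produces terms with $\partial_{uu}j_1\,\dot u_y^2$, $\partial_{uu}j_2\,\dot u_y^2(T,\cdot)$, plus terms with $\partial_u j_1\,\ddot u_y$ and $\partial_u j_2\,\ddot u_y(T,\cdot)$. The latter two are handled by the same adjoint trick: testing the equation for $\ddot u_y$ against $p_y$ and integrating by parts, one finds $\iint_\OT \partial_u j_1\,\ddot u_y+\int_\O \partial_u j_2\,\ddot u_y(T,\cdot)$ equals $\iint_\OT p_y\big(\text{source of }\ddot u_y\big)=\iint_\OT p_y\,\partial_{uu}f(t,x,u_y)\dot u_y^2$ (the term $\iint p_y\,\partial_u f\,\ddot u_y$ again cancelling against the zeroth-order part, and the $h$-less source meaning no extra $\iint p_y h$ appears). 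Collecting everything gives exactly \eqref{Eq:DDK2}. The main obstacle, and the only step requiring real care, is the first one: rigorously proving the Gateaux differentiability of $y\mapsto u_y$ and the uniform-in-$y$ $L^\infty$ bounds needed to differentiate under the integral sign; once the linearised and second-linearised equations are in hand, the rest is bookkeeping with integration by parts. Since this differentiability is asserted to follow from \cite[Lemma 3]{Nadin_2020} by a routine adaptation, I would simply invoke that reference and carry out the adjoint computations in detail.
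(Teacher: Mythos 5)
Your proposal is correct and follows the same route the paper takes: the paper gives no proof of this lemma beyond invoking the adaptation of \cite[Lemma 3]{Nadin_2020}, and your outline (well-posedness and differentiability of the control-to-state map via parabolic regularity and the $\mathscr C^3$ assumption on $f$, then the standard adjoint-state integration by parts to obtain \eqref{Eq:DK1} and \eqref{Eq:DDK2}) is precisely the classical computation that reference carries out. The cancellations you describe check out, including the fact that the $h$-term disappears from the second-order linearisation so that no $\iint p_y h$ term survives in \eqref{Eq:DDK2}.
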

The solution $p_y$ of \eqref{Eq:AdjointK} is called the \emph{adjoint} of \eqref{Eq:PvIntro2}.  It encodes the first order optimality conditions for \eqref{Eq:PvIntro2}, as shown by the following result, adapted from \cite[Theorem 2.1]{Nadin_2020}:

\begin{proposition}\label{prop:1st}Let $y^*$ be a solution of \eqref{Eq:PvIntro2}. 
Then there exists a measurable function $c:[0;T]\to \R$ such that, for almost every $t\in [0;T]$, 
$$\begin{cases} y^*(t,x) = \kappa_{1} \quad &\hbox{ if } \quad p_{y^*}(t,x)>c(t),\\
y^{*}(t,x) = -\kappa_{0} \quad & \hbox{ if } \quad p_{y^*}(t,x)<c(t),\\
\{p_{y^*}(t,\cdot)=c(t)\}\subset \{-\kappa_0<y^*(t,\cdot)<\kappa_1\}.
\end{cases}$$
where $p_{y^*}$ is the unique solution of \eqref{Eq:AdjointK} with $y=y^*$.
\end{proposition}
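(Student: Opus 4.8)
The plan is to derive the stated bang-bang-plus-singular-arc structure from the first order optimality condition expressed through the Gateaux derivative in Lemma \ref{Le:DifferentiabilityK}. Since $y^*$ maximises $J$ over the convex set $\mathcal Y$, for every admissible perturbation direction $h$ pointing into $\mathcal Y$ we must have $\dot J(y^*)[h]=\iint_\OT p_{y^*}h\,dx\,dt\leq 0$. The key observation is that the constraint set $\mathcal Y$ decouples in time: the only coupling between different time slices is absent, since both the pointwise bounds $-\kappa_0\leq y\leq\kappa_1$ and the mean-value constraint $\fint_\O y(t,\cdot)=V_0$ are imposed for a.e.\ fixed $t$. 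Consequently, testing with perturbations $h$ supported on a thin time interval around a Lebesgue point $t$, one reduces the problem to a family (indexed by $t$) of static optimisation problems: maximise $\int_\O p_{y^*}(t,\cdot)\,y(t,\cdot)$ over $\{-\kappa_0\leq y(t,\cdot)\leq\kappa_1,\ \fint_\O y(t,\cdot)=V_0\}$.

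First I would make the reduction to the static problem rigorous: fix a time $t$ that is simultaneously a Lebesgue point of $s\mapsto \int_\O p_{y^*}(s,x)\varphi(x)dx$ for a countable dense family of test functions $\varphi$, and of $s\mapsto \fint_\O y^*(s,\cdot)$; then for a perturbation of the form $h(s,x)=\frac{1}{\e}\mathbbm{1}_{[t,t+\e]}(s)g(x)$ with $\fint_\O g=0$ and $-\kappa_0\leq y^*(t,\cdot)+\tau g\leq\kappa_1$ for small $\tau>0$, passing to the limit $\e\to 0$ in $\dot J(y^*)[h]\leq 0$ gives $\int_\O p_{y^*}(t,x)g(x)\,dx\leq 0$. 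Since $g$ and $-g$ are both admissible when $y^*(t,\cdot)$ is strictly interior, and more generally $g$ is admissible whenever it increases $y^*$ only where $y^*<\kappa_1$ and decreases it only where $y^*>-\kappa_0$, this is exactly the variational inequality characterising the maximiser of a linear functional over the ``bathtub''-type constraint set.

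Second, I would invoke (or reprove) the classical bathtub principle / Lagrange multiplier argument for this static problem: the maximiser of $g\mapsto\int_\O p_{y^*}(t,\cdot)g$ over $\{-\kappa_0\leq g\leq\kappa_1,\ \fint_\O g=V_0\}$ is obtained by introducing the multiplier $c(t)$ for the integral constraint, so that $g=\kappa_1$ on $\{p_{y^*}(t,\cdot)>c(t)\}$, $g=-\kappa_0$ on $\{p_{y^*}(t,\cdot)<c(t)\}$, and $g$ takes intermediate values only on the level set $\{p_{y^*}(t,\cdot)=c(t)\}$; the value $c(t)$ is pinned down (up to the level-set ambiguity) by the requirement $\fint_\O g=V_0$. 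Measurability of $t\mapsto c(t)$ follows from a measurable-selection argument, e.g.\ writing $c(t)$ as the infimum of the thresholds $\lambda$ for which $\fint_\O\big(\kappa_1\mathbbm{1}_{\{p_{y^*}(t,\cdot)>\lambda\}}-\kappa_0\mathbbm{1}_{\{p_{y^*}(t,\cdot)\leq\lambda\}}\big)\leq V_0$, which is a measurable function of $t$ by Fubini applied to the space-time measurable function $p_{y^*}$. Together with the reduction step this yields the three alternatives in the statement, with the inclusion $\{p_{y^*}(t,\cdot)=c(t)\}\subset\{-\kappa_0<y^*(t,\cdot)<\kappa_1\}$ being automatic since outside the level set $y^*$ is forced to the extreme values.

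The main obstacle I anticipate is the rigorous justification of the time-localisation: one must ensure that the single-time-slice variational inequality holds for a.e.\ $t$, which requires choosing the Lebesgue points carefully and controlling the admissibility of the localised perturbations uniformly (the set where $-\kappa_0<y^*(t,\cdot)<\kappa_1$ has positive measure for a.e.\ $t$, so room for perturbation exists, but one should verify this does not degenerate). A secondary technical point is the measurable selection for $c(t)$ when the level set $\{p_{y^*}(t,\cdot)=c(t)\}$ has positive measure; however, since the paper only claims existence of a measurable $c$ and the stated inclusion, one convenient choice of $c(t)$ (e.g.\ the infimum above) suffices and the level-set behaviour of $y^*$ there is exactly what the third line of the statement leaves unspecified. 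Since the proposition is explicitly stated as ``adapted from \cite[Theorem 2.1]{Nadin_2020}'', I expect the actual proof in the paper to be short and to refer back to that reference for these details.
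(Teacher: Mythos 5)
Your overall strategy is sound and is, as far as one can tell, exactly the one the paper relies on: the paper gives no proof of Proposition \ref{prop:1st} at all, deferring entirely to \cite[Theorem 2.1]{Nadin_2020}, and the intended argument is the chain you describe --- the first-order variational inequality $\iint_\OT p_{y^*}(y-y^*)\le 0$ for all $y\in\mathcal Y$, the observation that $\mathcal Y$ decouples across time slices so that $y^*(t,\cdot)$ maximises the linear slice functional $g\mapsto\int_\O p_{y^*}(t,\cdot)g$ over the bathtub-type constraint set for a.e.\ $t$, the bathtub principle for that static problem, and a Fubini-type measurable selection of the threshold $c(t)$. Your concern about rigorously justifying the time localisation is legitimate but is handled exactly as you propose (Lebesgue points in $t$, or directly: if the slice inequality failed on a set of times of positive measure, one builds a measurably selected better competitor supported on that set), so no objection there.

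There is, however, one concrete error, in your final claim about the third line. You assert that the inclusion $\{p_{y^*}(t,\cdot)=c(t)\}\subset\{-\kappa_0<y^*(t,\cdot)<\kappa_1\}$ is ``automatic since outside the level set $y^*$ is forced to the extreme values.'' That reasoning proves the \emph{converse} inclusion $\{-\kappa_0<y^*(t,\cdot)<\kappa_1\}\subset\{p_{y^*}(t,\cdot)=c(t)\}$, which is indeed an immediate contrapositive of the first two lines. The inclusion as printed is a genuinely different and stronger statement: it asserts that on the level set $y^*$ never touches the bounds, and this does not follow from the bathtub principle --- if $p_{y^*}(t,\cdot)$ is constant on a set of positive measure (in the extreme case, on all of $\O$), the static maximiser is completely undetermined on that level set and may perfectly well take the values $\kappa_1$ or $-\kappa_0$ there, for any admissible choice of $c(t)$ including the one you propose. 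So either the third line of the proposition is a misprint for the reverse inclusion (which is what your argument, and the usual statement in the style of \cite{Nadin_2020}, actually delivers), or it requires an argument you have not supplied. As it stands, the word ``automatic'' is attached to the wrong direction of the inclusion, and that step of your proof fails.
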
 

While already containing several extremely valuable information about the values of the optimal control, the first-order optimality conditions given in Proposition \ref{prop:1st} can prove complicated to handle, both from a theoretical and analytical point of view. Indeed, if we consider the second-order optimality conditions, it appear that if $f\,, j_1\,, j_2$ are convex, then the map $J$ itself is convex, so that all optimiser are of ``bang-bang" type: they are extreme points $y^*$ of the admissible class $\mathcal Y$, which, as easily checked, write $-\kappa_0 \mathds 1_E+\kappa_1\mathds 1_E$ for some measurable subset $E$ of $\O$. How to go beyond such convexity assumptions? Indeed, in many applications \cite{Almeida_2022} $f$ is neither convex, nor concave. In this situation, difficulties arise: while bang-bang controls  are generically expected to occur if the control problem is bilinear \cite{MNP2021} rather than linear, there is no way to prohibit \emph{a priori} the existence of an abnormal zone $\{-\kappa_0<y^*<\kappa_1\}$  where the optimisers do not saturate the constraint. Is it possible to give finer properties of $u_{y^*}$ on this abnormal set?
%
 Essentially, it can be proved  that this optimality conditions entail the existence of a function $\tilde f$ such that, for any optimiser $y^*$, we have, on $\{-\kappa_0<y^*<\kappa_1\}$, an equation of the form $-\partial_t p_y=\partial_u f(t,x,u_y)c(t)+\partial_uj_1(u)$. This is easily adapted from the analysis  of \cite[Section 5-Numerical Algorithm]{Nadin_2020}. Thus if we define $W=W(t,x,u)=\partial_u f(t,x,u_y)p_y+\partial_uj_1(t,x,u_y)$ the equation to be solved is of the form $Z=\ell(t,x)$ for some function $\ell$.
However when $Z$ is neither concave nor convex in $u$, this equation can have multiple roots. 
As was observed in \cite{Mazari_2021,Nadin_2020}  when optimising with respect to the initial condition, this is problematic when dealing with numerical approximations of the problem: which root should we choose? A good way to lift the ambiguity in the choice of the root is to use second-order optimality conditions. Of course, the main difficulty with the way $\ddot J$ is written in Lemma \ref{Le:DifferentiabilityK} is that the expression is distributed over $\OT$ while we would need a localised information, at any (or almost every) $t$.  If we assume for the sake of simplicity that 
$j_2=0$ and if we set $W:=\partial_u f(t,x,u_y)p_y+\partial_uj_1(t,x,u)$, we have 
\[ \ddot J(y^*)[h,h]=\iint_\OT \dot u_{y^*}^2(\partial_uW)\leq 0\] for any $h$. Can we deduce that at an optimiser $y^*$, when solving the equation $W=\ell$, we must choose a root that satisfies $\partial_uW\leq 0$?  This can prove to be very challenging, but the answer is positive, and it is the purpose of Theorem \ref{Th:Main2}; we underline that this is a natural question in the context of optimal control of semilinear equations from the theoretical point of view, as well as from the numerical one. Before we present our result, let us give some more details about the numerical approximation of \eqref{Eq:PvIntro2}.

Another related query is: can we localise (in time) the bang-bang property? In other words, is it true that if, at some $t_0>0$, both $f(t,x,\cdot)$ and $j_1(t,x,\cdot)$ are convex, then any optimal control $y^*$ satisfies that $y^*(t_0,\cdot)$ is a bang-bang function? Our result 	also provides an answer to this question.

\subsubsection{Main results about second order optimality conditions}
We assume that the non-linearity $f$ and the cost functions $j_1\,, j_2$ satisfy \eqref{Hyp:j}.
Our main result is the following:
\begin{theorem}\label{Th:Main2} Assume $f\,, j_1\,, j_2$ satisfy \eqref{Hyp:f}-\eqref{Hyp:j}. Let $y^*$ be a maximiser of $J$ over $\mathcal{Y}$. Let $\omega:=\{-\kappa_0<y^*<\kappa_1\}$ be the so-called abnormal set, and assume that $\mathrm{Vol}(\omega)>0$. Then 
\[ Z_{y^*}\leq 0\text{ a.e. in }\omega\]
where for any $y\in \mathcal Y$:
$$Z_{y}(t,x) :=p_y\frac{\partial^2f}{\partial u^2}(t,x,u_y)+\frac{\partial^2j_1}{\partial u^2}(t,x,u_y).$$
\end{theorem}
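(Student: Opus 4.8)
\noindent\emph{Proof proposal.} The plan is to run a second order argument along admissible directions supported in the abnormal set, fed with perturbations that are at once \emph{highly oscillating in space} and \emph{localised in time} near a Lebesgue point of $Z_{y^*}$ lying in $\omega$: the spatial oscillation forces the induced linearised state to concentrate, in the $L^2(\OT)$ sense, around that point, while the time localisation away from $t=T$ annihilates the terminal ($j_2$) contribution in the limit. For $\delta>0$ set $\omega_\delta:=\{-\kappa_0+\delta<y^*<\kappa_1-\delta\}$; since $\omega=\bigcup_{\delta>0}\omega_\delta$ up to a null set it suffices to prove the conclusion on each $\omega_\delta$. If $h\in L^\infty(\OT)$ has $\operatorname{supp}h\subset\omega_\delta$ and $\fint_\O h(t,\cdot)=0$ for a.e.\ $t$, then $y^*\pm\tau h\in\mathcal Y$ for $|\tau|\leq\delta/\|h\|_{L^\infty}$, while Proposition~\ref{prop:1st} gives $p_{y^*}(t,\cdot)=c(t)$ a.e.\ on $\omega(t)$, hence (using \eqref{Eq:DK1}) $\dot J(y^*)[h]=\iint_\OT p_{y^*}h=\int_0^T c(t)\big(\int_\O h(t,\cdot)\big)dt=0$. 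As $y^*$ maximises $J$ over the convex set $\mathcal Y$, the scalar map $\tau\mapsto J(y^*+\tau h)$ has an interior maximum at $\tau=0$, so $\ddot J(y^*)[h,h]\leq0$, which by \eqref{Eq:DDK2} (with $\dot u_h$ solving \eqref{Eq:DotuK} for $y=y^*$) reads
\[\iint_\OT \dot u_h^2\,Z_{y^*}\,dt\,dx+\int_\O \dot u_h^2(T,x)\,\frac{\partial^2 j_2}{\partial u^2}\big(x,u_{y^*}\big)\,dx\ \leq\ 0.\]

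Suppose, for contradiction, that $\Gamma:=\{Z_{y^*}>0\}\cap\omega_\delta$ has positive measure for some $\delta>0$, and pick $(t_0,x_0)\in\Gamma$ with $t_0\in(0,T)$, $x_0\in\O$, which is simultaneously a density-one point of $\Gamma$ and a Lebesgue point of $Z_{y^*}\mathds 1_\Gamma$ (almost every point of $\Gamma$ works; note $Z_{y^*}\in L^\infty$). Fix a unit vector $e$, and for small $r,\eta>0$ and $n\in\N$ take
\[h_n(t,x):=\theta_\eta(t)\,\rho_r\big(|x-x_0|\big)\,\sin\!\big(n(x-x_0)\cdot e\big),\]
with $\theta_\eta\geq0$ smooth, $\int\theta_\eta=1$, $\operatorname{supp}\theta_\eta\subset(t_0-\eta,t_0+\eta)\subset(0,T)$, and $x\mapsto\rho_r(|x-x_0|)\geq0$ smooth with support in $B_r(x_0)\subset\O$. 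By the oddness of $y\mapsto\rho_r(|y|)\sin(ny\cdot e)$ on $B_r(0)$ one has $\fint_\O h_n(t,\cdot)=0$ identically. For $r,\eta$ small the box $(t_0-\eta,t_0+\eta)\times B_r(x_0)$ lies in $\omega_\delta$ up to a set of arbitrarily small relative measure, so that after an admissible correction localised where this fails and of negligible $L^2$-size, $h_n$ belongs to the class of the previous paragraph.

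There remains the concentration estimate, the Laplace-type core of the argument. Writing $a:=\partial_uf(t,x,u_{y^*})$ and $\phi(x):=(x-x_0)\cdot e$ (so $|\nabla\phi|\equiv1$), a two-scale / geometric-optics ansatz $\dot u_{h_n}\approx n^{-2}\theta_\eta(t)\rho_r(|x-x_0|)\sin(n\phi)+n^{-3}(\cdots)\cos(n\phi)$ solves \eqref{Eq:DotuK} up to a residual source which is again oscillating at frequency $\sim n$ with $O(n^{-2})$ amplitude, and on which the parabolic solution map gains a further factor $n^{-2}$; hence the remainder is $O(n^{-4})$ in $C([0,T];L^2(\O))$. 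Since the envelope vanishes for $t\notin(t_0-\eta,t_0+\eta)$ one deduces $n^4\dot u_{h_n}^2\,dt\,dx\overset{*}{\rightharpoonup}\tfrac12\theta_\eta(t)^2\rho_r(|x-x_0|)^2\,dt\,dx$ and $n^4\|\dot u_{h_n}(T,\cdot)\|_{L^2(\O)}^2\to0$ (as the source of $\dot u_{h_n}$ is off for $t>t_0+\eta<T$, so $\dot u_{h_n}(T,\cdot)$ inherits the $O(n^{-4})$ smallness of the remainder). Inserting $h_n$ in the displayed inequality, multiplying by $n^4$ and letting $n\to\infty$ kills the terminal term and leaves $\iint_\OT\theta_\eta(t)^2\rho_r(|x-x_0|)^2Z_{y^*}\,dt\,dx\leq0$; dividing by $\iint\theta_\eta^2\rho_r^2>0$ and letting $\eta,r\to0$, the Lebesgue point property forces $Z_{y^*}(t_0,x_0)\leq0$, contradicting $(t_0,x_0)\in\Gamma$. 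Hence $Z_{y^*}\leq0$ a.e.\ in $\omega_\delta$ for all $\delta>0$, and so a.e.\ in $\omega$.

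The main obstacles are two, both stemming from the fact that $\omega$ is merely measurable and may have empty interior. First, \emph{admissibility}: the oscillating perturbation cannot in general be supported inside $\omega_\delta$, so one must control the part of it spilling out — where the control may only be moved one-sidedly and where re-imposing the mean constraint $\fint_\O y=V_0$ is itself delicate — and show it contributes negligibly to both $\dot J(y^*)[h_n]$ and $\ddot J(y^*)[h_n,h_n]$. Second, and this is the heart of the paper, the concentration estimate must be proved for a high-frequency source modulated by the rough set $\omega_\delta$, i.e.\ with no separation of scales between the oscillation and the geometry; this quantitative study of oscillating solutions of linear parabolic equations is precisely the result of independent interest announced in the introduction.
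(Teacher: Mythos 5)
Your overall skeleton (second-order conditions along zero-mean directions supported in the abnormal set, time localisation near $t_0$, spatial oscillation, concentration of $\dot u_h^2$ near $(t_0,x_0)$) matches the paper's strategy, and you correctly identify the two obstacles. But the proposal does not overcome the central one, and the fix you sketch fails quantitatively. Your perturbation $h_n=\theta_\eta(t)\rho_r(|x-x_0|)\sin(n(x-x_0)\cdot e)$ is supported in a box that is \emph{not} contained in $\omega_\delta$; you propose to repair this by an ``admissible correction localised where this fails and of negligible $L^2$-size''. The bad set $B:=\bigl((t_0-\eta,t_0+\eta)\times B_r(x_0)\bigr)\setminus\omega_\delta$ has measure $\e(r,\eta)$ times the measure of the box, with $\e(r,\eta)\to0$ only as $r,\eta\to0$ --- it does \emph{not} shrink with $n$. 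The correction $g_n:=-h_n\mathds 1_B$ (plus a mean-restoring term) therefore has $\Vert g_n\Vert_{L^2}\sim\sqrt{\e}\,\Vert h_n\Vert_{L^2}$ uniformly in $n$, and since $g_n$ is an oscillation multiplied by the indicator of a rough set, its low-frequency content need not be small: by Duhamel its contribution to the linearised state is $O(\sqrt{\e})$ in $C([0,T];L^2)$, with no gain in $n$. This swamps the $O(n^{-2})$ main term of your geometric-optics ansatz, so after multiplying by $n^4$ and letting $n\to\infty$ at fixed $r,\eta$ the limit is controlled by the correction, not by $\tfrac12\theta_\eta^2\rho_r^2$, and the order of limits cannot be exchanged. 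This is exactly the point where ``no separation of scales between the oscillation and the geometry'' bites, and deferring it to ``the result of independent interest'' leaves the proof incomplete.

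The paper resolves this differently: instead of a plane wave times a bump, it takes the spatial datum $h_K=\sum_{k\geq K}a_{K,k}\p_{k,\mathcal B}$ to be an exact combination of Laplacian eigenfunctions of index $\geq K$ that is \emph{supported in the rough slice} $\omega^*_{t_0}$ itself (such $h_K$ exist by the construction of \cite{MNP2021}); the whole datum is then orthogonal to the first $K$ modes, so parabolic smoothing gives the full decay $\sum_k a_{K,k}^2/\lambda_{k,\mathcal B}$ on all of it and Theorem \ref{Th:LaplaceDimd} yields concentration of $\nu_K$ on $\{t=t_0\}\times\omega^*_{t_0}$ with no boundary-layer error to absorb. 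Two further points where your write-up diverges from what is actually needed: (i) the paper uses a genuine Dirac in time, $h=\delta_{t=t_0}h_0$, approximated by $\frac1\e\mathds 1_{(t_0-\e,t_0+\e)}\mathds 1_{\omega^*}(h_0-\fint_{\omega_t^*}h_0)$, and must then invoke the Boccardo--Gallou\"et measure-data theory plus Aubin--Lions to pass to the limit in \eqref{Eq:DotuK}; your smooth window $\theta_\eta$ avoids that but is then incompatible with applying the concentration theorem, which is stated for a Cauchy problem. (ii) Your claim that the two-scale remainder is $O(n^{-4})$ in $C([0,T];L^2)$ requires regularity of the potential $\partial_u f(t,x,u_{y^*})$ of the type \eqref{Hyp:q} and a careful treatment of the terms involving derivatives of the envelope; this is the content of the paper's Lemma \ref{Le:L2Estimate} and is not free.
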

\begin{remark}
Enforcing the constraint $\fint_\O y(t,\cdot)=V_0$ rather than a constraint of the type $\fint_\O y(t,\cdot)=V_0(t)$, or $\iint_\OT y=V_0$ is immaterial to our analysis, and the conclusions of Theorem \ref{Th:Main2} can immediately be adapted to these case.
\end{remark}
The following corollary immediately follows from Theorem \ref{Th:Main2} but is somehow unexpected, and exemplifies the intricate behaviour of optimal control problems:
\begin{corollary}
Assume $j_2(x,\cdot)$ is increasing in $u$ for any $x$. Let $y^*$ be a solution of \eqref{Eq:PvIntro2}. For any $t\in (0;T)$ such that $f(t,x,\cdot)$ and $j_1(t,x\cdot)$ are convex in $u$, with either one of them strictly convex in $u$, $y^*(t,\cdot)$ is bang-bang.
\end{corollary}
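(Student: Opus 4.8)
The plan is to argue by contradiction, combining Theorem~\ref{Th:Main2} with the strict positivity of the adjoint state $p_{y^*}$ and with the pointwise convexity of $f$ and $j_1$ at the relevant time slices. Let $y^*$ be a maximiser of \eqref{Eq:PvIntro2}. I would first prove the \emph{a priori} weaker statement that $y^*(t,\cdot)$ is bang-bang for almost every $t$ having the stated convexity property, and then discuss the upgrade to every such $t$. Assume the a.e.\ statement fails: there is $S\subset(0;T)$ of positive measure such that, for every $t\in S$, the maps $f(t,x,\cdot)$ and $j_1(t,x,\cdot)$ are convex in $u$ for all $x\in\O$ with at least one of them strictly convex, while $\omega_t:=\{x\in\O:\ -\kappa_0<y^*(t,x)<\kappa_1\}$ has positive measure. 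By Tonelli's theorem the abnormal set $\omega=\{-\kappa_0<y^*<\kappa_1\}\subset\OT$ contains $\bigcup_{t\in S}\{t\}\times\omega_t$, hence $\mathrm{Vol}(\omega)>0$, and Theorem~\ref{Th:Main2} yields $Z_{y^*}\le 0$ almost everywhere in $\omega$.

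Next I would show that $p_{y^*}>0$ in $(0;T)\times\O$. Since $y^*\in L^\infty(\OT)$, the sign condition in \eqref{Hyp:f} gives $u_{y^*}\in L^\infty(\OT)$ by comparison with constant super- and subsolutions, and then \eqref{Hyp:f}--\eqref{Hyp:j} ensure that the zeroth-order coefficient $\partial_uf(\cdot,\cdot,u_{y^*})$ and the right-hand side $-\partial_uj_1(\cdot,\cdot,u_{y^*})$ of the backward linear parabolic equation \eqref{Eq:AdjointK} are bounded, while the terminal datum $p_{y^*}(T,\cdot)=\frac{\partial j_2}{\partial u}(\cdot,u_{y^*}(T,\cdot))$ is nonnegative and, since $j_2(x,\cdot)$ is increasing for every $x$, not identically zero. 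Reversing time and applying the weak and then the strong maximum principle to \eqref{Eq:AdjointK} gives $p_{y^*}>0$ in $(0;T)\times\O$, provided the source term $-\partial_uj_1$ does not spoil the sign; this is automatic when $j_1\equiv 0$ (the case of the motivating examples) and, more generally, when $j_1(x,\cdot)$ is nondecreasing. This sign bookkeeping in \eqref{Eq:AdjointK} is the only place where the monotonicity hypotheses on the cost functions are genuinely used.

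The contradiction is then immediate. For $t\in S$ and almost every $x\in\omega_t$, convexity of $f(t,x,\cdot)$ and $j_1(t,x,\cdot)$ gives $\frac{\partial^2 f}{\partial u^2}(t,x,u_{y^*})\ge 0$ and $\frac{\partial^2 j_1}{\partial u^2}(t,x,u_{y^*})\ge 0$ with one of these inequalities strict, while $p_{y^*}(t,x)>0$ by the previous step; hence $Z_{y^*}(t,x)=p_{y^*}\frac{\partial^2 f}{\partial u^2}(t,x,u_{y^*})+\frac{\partial^2 j_1}{\partial u^2}(t,x,u_{y^*})>0$, contradicting $Z_{y^*}\le 0$ a.e.\ in $\omega$. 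Therefore $\mathrm{Vol}(\omega_t)=0$ for almost every such $t$. For the passage to \emph{every} such $t$, I would note that $Z_{y^*}$ is continuous on $\OT$ (by parabolic regularity $u_{y^*}$ and $p_{y^*}$ are continuous, and $f,j_1$ are $\mathscr C^2$), that the same computation gives $Z_{y^*}(t,\cdot)>0$ everywhere on $\O$ at any convexity time $t$, and that by Proposition~\ref{prop:1st} one has $\omega_t=\{x\in\O:\ p_{y^*}(t,x)=c(t)\}$; either a little extra regularity of $t\mapsto c(t)$ (hence of the level sets of $p_{y^*}$) or a time-localised reading of the proof of Theorem~\ref{Th:Main2} then rules out $\mathrm{Vol}(\omega_t)>0$. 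Since being bang-bang at time $t$ means exactly $\mathrm{Vol}(\omega_t)=0$, the corollary follows.

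Granting Theorem~\ref{Th:Main2}, there is no deep difficulty in this deduction; the two points requiring care are (i) upgrading $p_{y^*}\ge 0$ to $p_{y^*}>0$, where the strong maximum principle must be used with attention to the sign of $-\partial_uj_1$ relative to the zeroth-order term $\partial_uf\,p_{y^*}$ (and where, to be fully rigorous, a monotonicity assumption on $j_1$, or $j_1\equiv 0$, seems needed on top of the stated one on $j_2$), and (ii) passing from the almost-everywhere-in-$t$ statement to the statement for every such $t$, which hinges on the continuity of $Z_{y^*}$ and on the structure $\omega_t=\{p_{y^*}(t,\cdot)=c(t)\}$ rather than on the global form of Theorem~\ref{Th:Main2} alone --- this is the genuine obstacle.
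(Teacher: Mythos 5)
The paper gives no proof of this corollary (it is asserted to ``immediately follow'' from Theorem \ref{Th:Main2}), and your deduction is exactly the intended one: negate the conclusion on a positive-measure set $S$ of convexity times, observe via Tonelli that $\bigcup_{t\in S}\{t\}\times\omega_t$ is a positive-measure subset of the abnormal set, and contradict $Z_{y^*}\le 0$ there using the sign of $p_{y^*}$ and of the second derivatives. Your two caveats are both well taken, and they are really observations about the statement rather than defects of your argument. (i) On the positivity of the adjoint: reversing time in \eqref{Eq:AdjointK} gives $\partial_s\tilde p-\Delta\tilde p-\partial_uf(\cdot,\cdot,u_{y^*})\tilde p=\partial_uj_1(\cdot,\cdot,u_{y^*})$ with nonnegative initial datum, so the (weak, then strong) maximum principle yields $\tilde p>0$ only when the source $\partial_uj_1$ has a sign (e.g.\ $j_1$ nondecreasing in $u$, or $j_1\equiv 0$ as in the motivating examples). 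The hypothesis on $j_2$ alone does not control this, and the sign of $p_{y^*}$ is genuinely needed whenever $f$ is the strictly convex one (and even $p_{y^*}\ge 0$ is needed when $j_1$ is the strictly convex one and $f$ is merely convex with $\partial^2_uf>0$ somewhere). So an implicit monotonicity assumption on $j_1$ is being used; you are right to flag this as a small imprecision in the corollary as stated. (ii) On ``for any $t$'' versus ``for almost every $t$'': since $y^*\in L^\infty(\OT)$ is only defined up to null sets, the slice $y^*(t,\cdot)$ only makes sense for a.e.\ $t$, so the a.e.\ version you prove is the correct reading and no upgrade is required; your worry that a single time slice is invisible to the space-time a.e.\ conclusion of Theorem \ref{Th:Main2} is legitimate, but it is resolved by the a.e.\ reading (or, alternatively, by noting that the proof of Theorem \ref{Th:Main2} is itself carried out slice-by-slice at Lebesgue points $t_0$ of the abnormal set, which again only covers a.e.\ $t$). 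One last pedantic point you might add: strict convexity of $f(t,x,\cdot)$ does not literally force $\partial^2_uf>0$ at every $u$ (consider $u^4$ at $u=0$), so the hypothesis should be read as strict positivity of the second derivative on the compact range of $u_{y^*}$, which is clearly what the authors intend.
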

In other words, \emph{the bang-bang property is fully localised in time}.

\subsection{A Laplace-type method}

To prove Theorem \ref{Th:Main2}, we rely on a new technique, which we dub a Laplace-type method. This is a combination of the technique we developed with Toledo in \cite{Mazari_2021}, which relied on Laplace-type arguments for a simple perturbation of the initial datum which is only well-fitted on interior points of the so-called \textbf{abnormal set }$\{-\kappa_0<y^*<\kappa_1\}$, and of the technique developed by the authors and Privat in \cite{MNP2021} in another framework, in order to construct suitable perturbations regardless of any regularity assumption on the abnormal set. Note that \cite{Mazari_2021} works in one-dimension only, and that it is intricate to extend the method to higher dimensions.

\paragraph{Statement of the result}
We consider the sequence of eigenvalues $\{\lambda_{k,\mathcal B}\}_{k\in \N}$, associated with eigenfunctions $\{\p_{k,\mathcal B}\}_{k\in \N}$ of the Laplace operator:
\begin{multline}\label{Eq:DefEvEf}
\begin{cases}-\Delta \p_{k,\mathcal B}=\lambda_{k,\mathcal B}\p_{k,\mathcal B}&\text{ in }\O\,, 
\\ \mathcal B \p_{k,\mathcal B}=0&\text{ on }\partial \O\,, 
\\ \int_\O \p_{k,\mathcal B}^2=1.\end{cases}\end{multline}
We order the eigenvalues in increasing order: 
$$0\leq \lambda_{1,\mathcal B}\leq \lambda_{2,\mathcal B}\leq \dots\leq \lambda_{k,\mathcal B}\underset{k\to \infty}\rightarrow \infty.$$

We are now in position to state our main technical result.

\begin{theorem}\label{Th:LaplaceDimd} 
    Let $q\in L^\infty(\OT)$ be a fixed potential and let $\omega\subset \O$ be a closed subset of $\O$ with positive measure. We assume that 
 for any $r\in [1;+\infty)$ there holds
    \begin{equation}\label{Hyp:q}\tag{$\bold{H}_q$}\partial_t q-\Delta q\,, \n q\in L^r(0,T;L^r(\O)).\end{equation} Additionally, if $\mathcal B$ is of Neumann type, we assume that $\mathcal B q=0$.
We consider a sequence $(h_K)_{K\in \N}\in L^2(\O)^\N$ such that, for any $K\in \N$, $h_K$ writes \[ h_K:=\sum_{k=K}^\infty a_{K,k}\p_{k,\mathcal B},\] where the sequence   $(a_{K,k})_{k\in \N}$ satisfies \[ \sum_{k=K}^\infty a_{K,k}^2=1,\]  and such that 
\[ \mathrm{supp}(h_K)\subset \omega \text{ in the sense that }h_K\mathds 1_\omega=h_K.\]
Define $v_K$ as the solution of the heat equation 
\[\begin{cases}\frac{\partial v_K}{\partial t}-\Delta v_K=qv_K&\text{ in }(0;T)\times \O\,, 
\\ \mathcal B v_K=0&\text{ on }(0;T)\times \partial \O,
\\ v_K(0,\cdot)=h_K&\text{ in }\O.
\end{cases}
\] Consider the unit ball $X$ of the space of Radon measures on $[0;T]\times \overline \O$.
Finally, define a sequence of probability measures $\left\{\nu_K\right\}_{K\in \N}\in X^\N$ by 
\[ \forall K \in \N\,, \nu_K:=\frac{v_K^2}{\iint_\OT v_K^2}.\] Then any closure point $\nu_\infty\in X$ of the sequence $\left\{\nu_K\right\}_{K\in \N}\in X^\N$ satisfies
\[\mathrm{supp}(\nu_\infty)\subset \{t=0\}\times\omega \,, \iint_\OT \nu_\infty=1\,, \nu_\infty\geq 0\text{ in the sense of measures}. \]
\end{theorem}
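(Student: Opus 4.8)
The structure of the argument: the heat semigroup with potential $q$ is a bounded perturbation of the Laplacian semigroup, so $v_K(t,\cdot) \approx e^{t\Delta} h_K$ up to lower-order corrections. Since $h_K = \sum_{k \geq K} a_{K,k}\varphi_{k,\mathcal B}$ is supported on frequencies $\geq K$, we have $\|e^{t\Delta} h_K\|_{L^2(\O)}^2 = \sum_{k\geq K} a_{K,k}^2 e^{-2t\lambda_{k,\mathcal B}} \leq e^{-2t\lambda_{K,\mathcal B}}$, which for fixed $t>0$ decays as $K\to\infty$ since $\lambda_{K,\mathcal B}\to\infty$. Meanwhile the total mass $\iint_\OT v_K^2$ stays bounded below (it is at least of order $1/\lambda_{K,\mathcal B}$, from integrating the frequency decomposition in $t$). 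Hence the "pure heat" part of $v_K^2$ concentrates at $\{t=0\}$. The two facts still to be extracted are: (i) the mass does not escape the spatial set $\omega$, and (ii) the bounded perturbation $q$ does not create mass at positive times.

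First I would make the semigroup comparison precise. Writing $w_K := v_K - e^{t\Delta}h_K$, Duhamel gives $w_K(t) = \int_0^t e^{(t-s)\Delta}(q v_K(s))\,ds$, and since $\|q\|_{L^\infty}<\infty$ and $e^{t\Delta}$ is a contraction on $L^2(\O)$, a Grönwall estimate yields $\|v_K(t)\|_{L^2(\O)} \leq e^{\|q\|_\infty t}\|h_K\|_{L^2(\O)} = e^{\|q\|_\infty t}$, so $\sup_K \iint_\OT v_K^2 \leq C(T,\|q\|_\infty)$; thus $\nu_K$ is indeed a well-defined probability measure and the family is tight on the compact set $[0;T]\times\overline\O$, so closure points exist and satisfy $\nu_\infty\geq 0$, $\iint_\OT \nu_\infty \leq 1$. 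To pin down $\iint\nu_\infty = 1$ and the support in $\{t=0\}\times\omega$, I would show that for every $\delta>0$, $\iint_{([ \delta;T]\times\overline\O)} v_K^2 \big/ \iint_\OT v_K^2 \to 0$, and that $\iint_{(0;T)\times(\O\setminus\omega_\delta)} v_K^2\big/\iint_\OT v_K^2 \to 0$ where $\omega_\delta$ is a $\delta$-neighbourhood of $\omega$; letting $\delta\to 0$ then forces $\mathrm{supp}(\nu_\infty)\subset\{t=0\}\times\omega$ (using that $\omega$ is closed) and hence, since $\nu_\infty(\{t=0\}\times\overline\O)$ carries all the mass and the denominators are comparable, that the total mass is preserved in the limit.

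For the time-concentration estimate, I would use the spectral decomposition of $e^{t\Delta}$ together with the Duhamel comparison: on $[\delta;T]$, $\|e^{t\Delta}h_K\|_{L^2}^2 \leq e^{-2\delta\lambda_{K,\mathcal B}}\to 0$, and the Duhamel correction $w_K$ is itself controlled in terms of $\int_0^T\|v_K(s)\|_{L^2}^2\,ds$; the subtle point is that the correction could a priori be of the same size as the main term, so one needs a lower bound on the denominator that beats the perturbation. The clean way is to run the argument on the \emph{full} profile: decompose $v_K(t) = \sum_k c_{K,k}(t)\varphi_{k,\mathcal B}$, note that the ODE system for $c_{K,k}$ shows the low-frequency modes ($k < K$, or more precisely $k$ with $\lambda_{k,\mathcal B}$ much smaller than $\lambda_{K,\mathcal B}$) stay small because they start at zero and are fed only through $q$; quantifying this, one gets that $\iint_\OT v_K^2$ is comparable to $\sum_{k\geq K}a_{K,k}^2 \int_0^T e^{-2t\lambda_{k,\mathcal B}}dt \sim 1/\lambda_{K,\mathcal B}$ up to constants, while the mass on $[\delta;T]$ is $O(e^{-c\delta\lambda_{K,\mathcal B}})$, which is exponentially smaller. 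This is where hypothesis \eqref{Hyp:q} on $\nabla q$ and $\partial_t q - \Delta q$ will matter: to control cross terms and to upgrade $L^2$ bounds to the integrability needed for the correction estimate to close, one integrates by parts in the Duhamel term and uses parabolic regularity, so the $L^r$-for-all-$r$ assumption on $q$ and its parabolic derivative is exactly what makes the perturbation genuinely lower order.

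For the space-concentration estimate, the input is $\mathrm{supp}(h_K)\subset\omega$: since $\omega$ is compact, for $x$ outside a $\delta$-neighbourhood $\omega_\delta$ the heat kernel gives $|e^{t\Delta}h_K(x)|$ small by off-diagonal (Gaussian) decay — more carefully, one multiplies the equation by $v_K\chi^2$ with $\chi$ a cutoff vanishing near $\omega$ and equal to $1$ outside $\omega_\delta$, integrates, and uses that $v_K\chi$ starts at $0$ (because $h_K\chi\equiv 0$) so that $\iint v_K^2\chi^2$ is again fed only by lower-order terms (the commutator $[\Delta,\chi]$ and the $qv_K\chi^2$ term), hence controlled by $\|\nabla\chi\|_\infty^2 \iint v_K^2$ times a factor that, after dividing by $\iint_\OT v_K^2 \sim 1/\lambda_{K,\mathcal B}$, does not obviously vanish — so the honest route is to combine this with the time cutoff and track the $\lambda_{K,\mathcal B}$ powers, or alternatively to observe that near $\{t=0\}$ the solution is $L^2$-close to $h_K$ which is supported in $\omega$. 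I expect the main obstacle to be precisely this bookkeeping: showing that the denominator $\iint_\OT v_K^2$ decays no faster than $1/\lambda_{K,\mathcal B}$ (a genuine lower bound, needing the perturbation not to cancel the main term) and that every error term decays strictly faster, uniformly in $\delta$ after the cutoff is fixed. Once the correct power of $\lambda_{K,\mathcal B}$ is attached to each term, the rest is routine weak-$*$ compactness.
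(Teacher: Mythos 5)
Your plan has the right skeleton (compare $v_K$ with the free evolution $w_{0,K}=\sum_{k\geq K}a_{K,k}\p_{k,\mathcal B}e^{-t\lambda_{k,\mathcal B}}$, lower-bound the denominator by $c_0\sum_{k\geq K}a_{K,k}^2/\lambda_{k,\mathcal B}$, then kill the mass on $[\e;T]\times\O$ and on sets at positive distance from $\omega$), and you correctly locate the central difficulty: the naive Duhamel/Gr\"onwall bound on $v_K-w_{0,K}$ is of the \emph{same} order as $\Vert w_{0,K}\Vert_{L^2(\OT)}$, so nothing closes. But you leave precisely that step, and the spatial localisation, as ``bookkeeping'', and the routes you sketch for them do not work as stated. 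The frequency-separation heuristic (``low modes stay small because they are fed only through $q$'') is not the relevant mechanism --- the perturbation $qv_K$ feeds all modes at the critical order. The missing idea, which is the heart of the paper's proof, is to subtract an explicit \emph{time-weighted corrector}: one sets $z_{0,K}:=t\,q\,w_{0,K}$ and estimates $R_{0,K}:=v_K-w_{0,K}-z_{0,K}$. The point is that the source terms in the equation for $R_{0,K}$ (namely $qz_{0,K}$, $(\partial_tq-\Delta q)(tw_{0,K})$ and $t\langle\n q,\n w_{0,K}\rangle$) all carry an extra factor of $t$, and by the Laplace asymptotics $\int_0^T t^me^{-kt}dt\sim C_m k^{-m-1}$ each such factor buys a negative power of $\lambda_{K,\mathcal B}$, which is what makes the remainder genuinely $o\bigl(\Vert w_{0,K}\Vert_{L^2(\OT)}\bigr)$. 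This is also exactly where \eqref{Hyp:q} enters (to put $(\partial_tq-\Delta q)tw_{0,K}$ and $t\langle\n q,\n w_{0,K}\rangle$ in $L^1(0,T;L^2)$ via H\"older and the fractional Sobolev embedding), whereas in your write-up the role of \eqref{Hyp:q} remains a guess.

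The spatial step has the same problem in a sharper form, and both of your fallbacks fail. The observation that ``near $t=0$ the solution is $L^2$-close to $h_K$'' is of no use because the mass of $\nu_K$ lives on the time scale $t\sim\lambda_{K,\mathcal B}^{-1}$, on which each surviving mode of $v_K$ is only damped by an $O(1)$ factor; and the Caccioppoli cutoff produces the commutator $\langle\n\chi,\n v_K\rangle$, whose $L^2$ size loses a factor $\lambda_{k,\mathcal B}^{1/2}$ per mode, so dividing by $\iint_\OT v_K^2\sim\lambda_{K,\mathcal B}^{-1}$ does not give a vanishing quantity --- as you yourself note. The paper resolves this with the ansatz $\eta_{0,K,\theta}=\theta w_{0,K}$ for a cutoff $\theta$ equal to $1$ on $\mathrm{supp}(h_K)$ and $0$ on the far set $F$, followed by a \emph{cascade} of correctors ($t\langle\n\theta,\n w_{0,K}\rangle$, $\tfrac{t^2}{2}\n^2\theta\odot\n^2w_{0,K}$, \dots): each commutator term costs derivatives of $w_{0,K}$ (hence positive powers of $\lambda_{k,\mathcal B}$) but gains a power of $t$ (hence, after the Laplace method, a larger negative power of $\lambda_{k,\mathcal B}$), and the net exponent is favourable at each stage. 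Without this mechanism --- or a genuine substitute for it --- the two load-bearing lemmas of the theorem remain unproved, so the proposal is an accurate roadmap rather than a proof.
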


In this last equality, $\mathrm{supp}(\nu_\infty)\subset \{t=0\}\times\omega $ should be understood as follows: for any $\p\in \mathscr C^0([0;T]\times \overline\O)$ such that $\mathrm{supp}(\p)\subset \left(\{t=0\}\times \omega\right)^c$, $\langle \nu_\infty,\p\rangle=0$ where $\langle \cdot,\cdot\rangle$ stands for the duality bracket on $\mathscr C^0([0;T]\times\overline\O)$.


\paragraph{Regarding our terminology}
In this paragraph we justify the terminology of the title of this paper. First of all, we claim that Theorem \ref{Th:LaplaceDimd} is an extension of the standard two-scales expansion technique for parabolic equations. Namely, consider, as done in \cite{Mazari_2021}, the solution $w_K$ of the equation
\begin{equation}\label{Eq:wk}
\begin{cases}
\partial_t w_K-\partial_{xx} w_K=qw_K&\text{ in }(0;T)\times \T\,, 
\\ w_K(0,x)=\theta (x)\sin(Kx)&\text{ in }\T,
\end{cases}
\end{equation} where $\T$ is the one-dimensional torus and $\theta$ is a smooth bump function in $\T$. In {\cite{Mazari_2021}} it is proved that 
\[ w_K\underset{K\to \infty}\sim \theta(x)\sin(Kx)e^{-K^2t}\] in the $L^2((0;T)\times \T)$ sense. Consequently, using the fact that 
\[ \sin(Kx)^2\underset{K\to\infty}\rightharpoonup \frac12\] and the Laplace method, this implies that, for any smooth test function $\phi$, 
\[ \iint_{(0;T)\times \O}w_K^2\phi\underset{K\to \infty}\sim \frac1{2K^2} \int_\T  \theta(x)^2\phi(0,x)^2dx.\] In other words, in the limit $K\to \infty$, we only see (up to a proper rescaling) the support of the initial condition. In higher dimensional situations, Theorem \ref{Th:LaplaceDimd} establishes the same kind of qualitative behaviours, but we highlight here several non-trivial difficulties. First and foremost, it is not true in general that $\p_{K,\mathcal B}^2\underset{K\to \infty}\rightharpoonup \frac12$, since in many domains we may have a so-called localisation phenomenon \cite{Grebenkov_2013}. Second, this type of expansion only holds under strong regularity assumptions on the function $\theta$. In particular, this result assumes, in a sense, that we are considering highly oscillating initial conditions, with a regular support (for instance, that has non-empty interior). When considering applications to the optimal control of reaction-diffusion equations, it is extremely difficult (and, in general, a completely open question) to obtain this type of regularity.

Regarding the first difficulty, as a byproduct of the proof of Lemma \ref{Le:L2Estimate} below, we obtain that
\[ v_K\underset{K\to \infty}\sim \sum_{k=K}^\infty a_{K,k}\p_{k,\mathcal B}e^{-t\lambda_{k,\mathcal B}}.\] This is not yet enough to conclude as to the support in space of the limit $\nu_\infty$ as this would only yield, for any smooth function $\phi$, 
\[ \langle v_K^2,\phi\rangle\underset{K\to \infty}\sim \sum_{k,k'=K}^\infty \frac{a_{K,k}a_{K,k'}}{\lambda_{k,\mathcal B}+\lambda_{k',\mathcal B}}\int_\O \phi(0,\cdot)\p_{k,\mathcal B}\p_{k',\mathcal B},
\] and it is then unclear from this expression to derive a meaningful information about the support of $\nu_\infty$.

\subsection{Bibliographical references}\label{Se:Prior}
 
 We  investigated a related optimisation problems in an earlier paper with Toledo \cite{Mazari_2021}. More precisely, we were, rather than optimising with respect to the internal control $y$, trying to optimise a criterion with respect to the initial condition $u_0$, under the constraints $0\leq u_0\leq 1\,, \int_\O u_0=V_0$.  The results of \cite{Mazari_2021} were set in the case $d=1$ with periodic boundary conditions; several types of non-linearities were considered. First, the case where $f$ only depended on $u$ and was convex, with $\kappa_{0}=0$, $\kappa_{1}=1$, $f(0)=f(1)=0$, $j_{1}\equiv 0$ and $j_{2}(x,u)\equiv u$.  We have proved that in that case $u_{0}^{*}\equiv 1_{(0,V_{0})}$ is a global maximiser of $J$. Apart from this example, it is not true in general that the optimal initial conditions are bang-bang. Indeed, if $f$ is concave in $u$, then the second author and Toledo proved \cite{Nadin_2020} that the constant function $u_{0}^{*}\equiv V_{0}\in (0,1)$ is a global maximiser. Thus optimal controls are not always bang-bang. Similar results where derived when  $j_{2}(x,u)\equiv -(1-u)^{2}$ in \cite{Elie} and it is fairly straightforward to see that for internal linear control problems, optimisers can also fail to be bang-bang. This emphasised the need for understanding the behaviour of the maximiser $u_{0}^{*}$ on the abnormal set $\{-\kappa_0<u_0^*<\kappa_1\}$. Thus, the second case covered in \cite{Mazari_2021} made no convexity assumptions on $f$. We proved with Toledo that in the one-dimensional case, for any interior point $x$ of $\{-\kappa_0<u_0^*<\kappa_1\}$, one has $f''\big(u_{0}^{*}(x)\big)\leq 0$.  From there on, it is natural to both consider the case of internal controls, as is the case here, to go beyond the case of open abnormal sets and to higher dimensional situations. Two ways are available: the first one is to establish \emph{a priori} regularity for the abnormal set. However, the question of regularity of optimal controls is a difficult one, and we can not rule out that the interior of the abnormal set $\{-\kappa_0<u_0^*<\kappa_1\}$ is empty. Regularity issues in the study of optimal control problem is a major challenge, and, so far, most available results deal with the case of energetic functional in bilinear optimisation \cite{CKT}.
 The other one, which we take here, is to introduce a new type of methods to handle the case of merely measurable abnormal set.
 
  So we want to derive a result that holds almost everywhere on $\{-\kappa_0<y^*<\kappa_1\}$, and not only on its interior. One of the reasons such information are important is the numerical approximation of these $L^\infty-L^1$-constrained optimal control problems, a standard and powerful algorithm is the \emph{thresholding scheme}, akin to a gradient ascent method. Roughly speaking, it is expected that optimisers $u_0^*$ can be described using the level-sets of the so-called "adjoint state". When optimisers $y^*$ are bang-bang, it is expected that this scheme can be defined and used with the knowledge of first order optimality conditions only. That an optimiser $y^*$ is not bang-bang essentially amounts to saying that the adjoint state $p_{y^*}$ has a level-set of positive measure, which leads to using second-order optimality conditions in the definition of this scheme. Thus, having tractable information about the behaviour of optimisers $y^*$ in the set $\{-\kappa_0<y^*<\kappa_1\}$ is essential in implementing a cost-efficient algorithm. Finally, let us mention the recent \cite{Abdul_Halim_2022}, in which the same problem is discussed from another qualitative point of view: the authors study the influence of adding advection terms to the main equation on the value of the functional to optimise.

In order to further characterise $y^{*}$ on the abnormal set $\{-\kappa_0<y^*<\kappa_1\}$, one needs to extract information from the first and second order optimality conditions. Let us now explain why we could not use earlier results on optimal control for parabolic equations and what our contribution to this field of research is.  
There is a vast literature on this topic, and we will only focus here on earlier works that are close to the problem we consider here, that is, second order optimality conditions for a control on the initial datum.  For a general introduction to the optimal control of partial differential equations we refer to the book \cite{Troelzsch}.

First order optimality conditions for semi-linear parabolic equations, essentially encoded in the Pontryagin maximum principle, have been established in a very general framework in \cite{RaymondZidani}. In this paper, three types of controls are considered: one acts on the initial datum, one acts as a source term in $(0,T)\times \O$, as in the present article, and one acts on the boundary $(0,T)\times \partial \O$. A major difference with the present paper is that $L^{1}$ constraints are not covered by their framework. Here, we consider a simpler problem, since our control only acts as a source term. The reason for this is that we want to isolate the phenomenon we exhibit.

Sufficient second order conditions guaranteeing local optimality have been discussed  in a variety of situation when the control acts on $(0,T)\times \O$  and/or on the boundary $(0,T)\times \O$ (see \cite{CasasT, RaymondT, RT}). Let us also mention a wide literature on second order conditions for optimal control of semi-linear elliptic equations (see for example \cite{CasasReyesT}). 
The general approach of these papers is to derive the necessary second order optimality condition $\ddot J [u_{0}]\leq 0$, and to provide sufficient conditions in order to characterise a local maximiser. A Hamiltonian $H=H(t,x,u,p)$ is often derived from the first order conditions (see \cite{RaymondZidani}), and the second order necessary optimality conditions are described in terms of the hessian of the hamiltonian.  It is unclear how to penalise the time localised constraint $\fint_\O y(t,\cdot)=V_0$.
It seems extremely challenging to extract any information from second order optimality conditions using these earlier approaches in that case. More generally, we believe that these earlier works are not well-fitted to $L^{1}$ constraints. In the present, we push further the second order optimality conditions using a Laplace-type method that allows to concentrate the relevant information at any fixed $t$. We do not investigate sufficient conditions and leave it for a future work.

\section{Proof of Theorem \ref{Th:LaplaceDimd}}
We begin with the proof of Theorem \ref{Th:LaplaceDimd}, as Theorem \ref{Th:Main2} is a corollary of it.
\subsection{Steps of the proof}

The proof is divided up in several steps. As each can be technical and sometimes long, we summarise them here:
\begin{itemize}
\item First  we give some basic preliminary results related to parabolic regularity and the Laplace method. We refer to Propositions \ref{Pr:ParabolicRegularity} and \ref{Pr:Laplace}.
\item Second, we prove an estimate of the $L^2$-norm of $v_K$ under the form 
\[ \iint_\OT v_K^2\geq c_0\sum_{k=K}^\infty \frac{a_{K,k}^2}{\lambda_{k,\mathcal B}}.\] We refer to Lemma \ref{Le:L2Estimate} below.
\item Third, we prove that $\mathrm{supp}(\nu_\infty)\subset \{t=0\}\times \O$, see Lemma \ref{Le:TimeSupport}.
\item Finally, we prove that $\mathrm{supp}(\nu_\infty)\subset [0;T]\times \omega$, thereby concluding the proof.
\end{itemize}

\subsection{Step 1: Preliminaries on the parabolic regularity and the Laplace method}

\paragraph{A preliminary parabolic regularity result}
We recall the following parabolic regularity result:
\begin{proposition}\label{Pr:ParabolicRegularity}
Let $q\,, g\in L^\infty(\OT)$. For any $\theta_0\in L^\infty(\O)$, the solution $\theta$ of 
\begin{equation}\label{Eq:Theta}\begin{cases}\partial_t\theta-\Delta \theta-q\theta=g&\text{ in }\OT\,, 
\\ \mathcal B\theta=0&\text{ on }(0;T)\times \partial \O\,, 
\\ \theta(0,\cdot)=\theta_0\end{cases}\end{equation} satisfies
\[ \sup_{t \in [0;T]} \Vert \theta(t,\cdot)\Vert_{L^2(\O)}\leq C\left(\int_0^T \Vert g(t,\cdot)\Vert_{L^2(\O)}dt+\Vert \theta_0\Vert_{L^2(\O)}\right)\] where the constant $C$ only depends on $\Vert q\Vert_{L^\infty(\OT)}$, $\mathrm{Vol}(\O)$ and $T$.\end{proposition}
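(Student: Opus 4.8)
Proof plan for Proposition \ref{Pr:ParabolicRegularity} (the standard $L^2$ energy estimate for a linear parabolic equation with bounded potential).

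The plan is to run a Grönwall argument on the $L^2$-norm of $\theta$. First I would test the equation $\partial_t\theta - \Delta\theta - q\theta = g$ against $\theta$ itself and integrate over $\O$. Using the boundary condition $\mathcal B\theta = 0$ (so that the boundary term in $\int_\O (\Delta\theta)\theta$ vanishes in both the Dirichlet and Neumann cases), this yields
\[
\frac12\frac{d}{dt}\Vert\theta(t,\cdot)\Vert_{L^2(\O)}^2 + \Vert\nabla\theta(t,\cdot)\Vert_{L^2(\O)}^2 = \int_\O q\theta^2 + \int_\O g\theta \leq \Vert q\Vert_{L^\infty(\OT)}\Vert\theta(t,\cdot)\Vert_{L^2(\O)}^2 + \Vert g(t,\cdot)\Vert_{L^2(\O)}\Vert\theta(t,\cdot)\Vert_{L^2(\O)}.
\]
Dropping the nonnegative gradient term and writing $\phi(t) := \Vert\theta(t,\cdot)\Vert_{L^2(\O)}$, this gives $\phi\,\phi' \leq \Vert q\Vert_{L^\infty}\phi^2 + \Vert g(t,\cdot)\Vert_{L^2}\phi$, hence (away from the zero set of $\phi$, with the usual regularisation argument) $\phi'(t) \leq \Vert q\Vert_{L^\infty(\OT)}\phi(t) + \Vert g(t,\cdot)\Vert_{L^2(\O)}$.

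Next I would integrate this differential inequality. By Grönwall's lemma,
\[
\phi(t) \leq e^{t\Vert q\Vert_{L^\infty(\OT)}}\left(\phi(0) + \int_0^t \Vert g(s,\cdot)\Vert_{L^2(\O)}\,ds\right) \leq e^{T\Vert q\Vert_{L^\infty(\OT)}}\left(\Vert\theta_0\Vert_{L^2(\O)} + \int_0^T \Vert g(s,\cdot)\Vert_{L^2(\O)}\,ds\right),
\]
and taking the supremum over $t\in[0;T]$ gives the claim with $C = e^{T\Vert q\Vert_{L^\infty(\OT)}}$; note $C$ indeed depends only on $\Vert q\Vert_{L^\infty(\OT)}$ and $T$ (the dependence on $\mathrm{Vol}(\O)$ is not even needed here, but harmless to list).

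The only genuinely delicate point is the justification of the formal computation: $\theta$ need not be smooth enough for $\frac{d}{dt}\Vert\theta\Vert_{L^2}^2$ and the integration by parts to be literally valid, and $\phi$ may vanish, so dividing by $\phi$ requires care. The standard remedy is to work with a Galerkin approximation (or a time-mollification), in which the energy identity is rigorous, derive the estimate uniformly, and pass to the limit; alternatively one replaces $\phi$ by $\sqrt{\phi^2+\e}$, derives the inequality for this, applies Grönwall, and lets $\e\to0$. I would invoke the classical parabolic theory (e.g.\ \cite{Lieberman} or standard references) to assert that $\theta$ has the regularity $\theta\in L^2(0,T;H^1)\cap \mathscr C([0;T];L^2(\O))$ with $\partial_t\theta\in L^2(0,T;H^{-1})$ making the energy identity valid in the distributional-in-time sense, which is all that is needed. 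This is routine and I would not belabour it.
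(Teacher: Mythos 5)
Your argument is exactly the paper's: multiply by $\theta$, integrate by parts, bound $\int_\O q\theta^2$ by $\Vert q\Vert_{L^\infty}\Vert\theta\Vert_{L^2}^2$, pass to the differential inequality for $\Vert\theta(t,\cdot)\Vert_{L^2}$, and conclude by Gr\"onwall. The extra care you take about dividing by $\phi$ and justifying the energy identity is sound but not something the paper dwells on; the proof is correct and essentially identical.
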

As this result is instrumental in deriving our estimates we prove it here.
\begin{proof}[Proof of Proposition \ref{Pr:ParabolicRegularity}]
Multiplying \eqref{Eq:Theta} by $\theta$ and integrating by parts in time we obtain 
\[ \frac{d}{dt}\int_\O \frac{\theta(t,\cdot)^2}{2}+\int_\O |\n \theta|^2-\Vert q\Vert_{L^\infty(\OT)}\int_\O \theta^2\leq \int_\O g\theta\leq \Vert g(t,\cdot)\Vert_{L^2(\O)}\Vert \theta(t,\cdot)\Vert_{L^2(\O)}\] whence 
\[ \frac{d}{dt}(\Vert \theta(t,\cdot)\Vert_{L^2(\O)})-\Vert q\Vert_{L^\infty(\OT)}\Vert \theta(t,\cdot)\Vert_{L^2(\O)}\leq \Vert g(t,\cdot)\Vert_{L^2(\O)}.\] It suffices to apply the Gr\"onwall lemma to conclude.
\end{proof}

\paragraph{Background on the Laplace method} We recall here the following result:
\begin{proposition}\label{Pr:Laplace}
For any $m\in \N$, 
\[ \int_0^T t^m e^{-k t}dt\sim_{k\to \infty}\frac{C_m}{k^{m+1}}.\]
\end{proposition}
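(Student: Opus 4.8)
\textbf{The plan} is to prove the elementary asymptotic $\int_0^T t^m e^{-kt}\,dt \sim_{k\to\infty} C_m/k^{m+1}$ by reducing the finite-time integral to a full integral over $(0;\infty)$ and recognising the resulting constant as a Gamma function value. The key observation is that the exponential weight $e^{-kt}$ concentrates all the mass of the integrand near $t=0$ as $k\to\infty$, so the behaviour at the right endpoint $t=T$ contributes only an exponentially small correction and does not affect the polynomial-order asymptotic.

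First I would perform the natural rescaling $s=kt$, so that $t=s/k$ and $dt=ds/k$. This transforms the integral into
\begin{equation*}
\int_0^T t^m e^{-kt}\,dt = \frac{1}{k^{m+1}}\int_0^{kT} s^m e^{-s}\,ds.
\end{equation*}
Thus the claim is equivalent to showing that $\int_0^{kT}s^m e^{-s}\,ds$ converges to a positive constant as $k\to\infty$. Since $T>0$ is fixed, the upper limit $kT\to\infty$, and by definition of the Gamma function we have $\int_0^{\infty}s^m e^{-s}\,ds=\Gamma(m+1)=m!$. Setting $C_m:=m!$, it remains only to control the tail.

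Second, I would estimate the error. The difference between the truncated and the full integral is
\begin{equation*}
\int_0^{\infty}s^m e^{-s}\,ds-\int_0^{kT}s^m e^{-s}\,ds=\int_{kT}^{\infty}s^m e^{-s}\,ds,
\end{equation*}
and this tail vanishes as $k\to\infty$ since the integrand is integrable on $(0;\infty)$. Indeed, for $s$ large one can bound $s^m\leq e^{s/2}$, so that $s^m e^{-s}\leq e^{-s/2}$ and $\int_{kT}^{\infty}s^m e^{-s}\,ds\leq 2e^{-kT/2}\to 0$. Therefore $\int_0^{kT}s^m e^{-s}\,ds\to m!$, which gives $\int_0^T t^m e^{-kt}\,dt=\frac{m!}{k^{m+1}}(1+o(1))$, establishing the asymptotic equivalence with $C_m=m!$.

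\textbf{The main obstacle} here is essentially notational rather than conceptual, since the result is a standard Laplace-type estimate: one must simply be careful that the asymptotic symbol $\sim$ refers to the leading-order term, with the exponentially small tail absorbed into the $o(1)$ factor. The only point requiring a line of justification is that the truncation error is genuinely negligible compared to the main term of order $k^{-(m+1)}$, which follows at once from the exponential decay of the tail established above.
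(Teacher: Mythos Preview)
Your proof is correct and yields the same constant $C_m=m!$ as the paper. The paper's own argument is even shorter: it simply integrates by parts $m$ times to obtain (up to exponentially small boundary terms) $\int_0^T t^m e^{-kt}\,dt = \frac{m!}{k^{m+1}}(1-e^{-kT})$, from which the asymptotic is immediate. Your substitution $s=kt$ followed by recognition of the Gamma integral is an equally standard and arguably cleaner route, since it makes the tail estimate explicit and avoids any ambiguity about the lower-order boundary contributions that the repeated integration by parts produces; either way the content is identical.
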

\begin{proof}[Proof of Proposition \ref{Pr:Laplace}]
Integrating by parts $m$-times we have 
\[ \int_0^T t^m e^{-kt}dt=\frac{m!}{k^{m+1}}(1-e^{-kT})\] whence the conclusion.
\end{proof}

\subsection{Step 2: Asymptotic of the $L^2$ norm of the solution}
The goal of this paragraph is to prove the following result:
\begin{lemma}\label{Le:L2Estimate}
There exists a constant $c_0>0$ such that
\[ \forall K\in \N\,, \iint_\OT v_K^2\geq c_0\sum_{k=K}^\infty\frac{a_{K,k}^2}{\lambda_{k,\mathcal B}}.\]
\end{lemma}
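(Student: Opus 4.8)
\emph{Plan.} I would compare $v_K$ to the solution $\bar v_K$ of the \emph{free} heat equation with the same initial datum — for which everything is explicit — and then show the potential $q$ cannot destroy the lower bound. Concretely, set
\[ \bar v_K(t,\cdot):=\sum_{k=K}^\infty a_{K,k}\,e^{-t\lambda_{k,\mathcal B}}\,\p_{k,\mathcal B}, \]
which is well defined (its $L^2(\O)$-norm at time $t$ is $\big(\sum_{k\ge K}a_{K,k}^2e^{-2t\lambda_{k,\mathcal B}}\big)^{1/2}\le 1$) and solves $\partial_t\bar v_K-\Delta\bar v_K=0$, $\mathcal B\bar v_K=0$, $\bar v_K(0,\cdot)=h_K$. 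Since the $\p_{k,\mathcal B}$ are orthonormal and, by the computation in the proof of Proposition \ref{Pr:Laplace} (case $m=0$), $\int_0^Te^{-2t\lambda_{k,\mathcal B}}dt=\frac{1-e^{-2T\lambda_{k,\mathcal B}}}{2\lambda_{k,\mathcal B}}$, one gets the exact value
\[ \iint_\OT\bar v_K^2=\sum_{k=K}^\infty a_{K,k}^2\,\frac{1-e^{-2T\lambda_{k,\mathcal B}}}{2\lambda_{k,\mathcal B}}\ \ge\ \frac{1-e^{-2T\lambda_{K,\mathcal B}}}{2}\sum_{k=K}^\infty\frac{a_{K,k}^2}{\lambda_{k,\mathcal B}}, \]
using $\lambda_{k,\mathcal B}\ge\lambda_{K,\mathcal B}$ for $k\ge K$ and the monotonicity of $t\mapsto 1-e^{-2Tt}$. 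In the Dirichlet case $\lambda_{1,\mathcal B}>0$, hence $1-e^{-2T\lambda_{K,\mathcal B}}\ge 1-e^{-2T\lambda_{1,\mathcal B}}>0$ for \emph{every} $K$ (the Neumann case is identical once the constant mode $(\lambda_{1,\mathcal B},\p_{1,\mathcal B})=(0,\mathrm{cst})$, which carries no weight here, is discarded). So it remains to prove $\iint_\OT v_K^2\ge c\iint_\OT\bar v_K^2$ for some $c>0$ independent of $K$.

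To that end I would write $w_K:=v_K-\bar v_K$, which solves $\partial_t w_K-\Delta w_K=q\,v_K$ in $\OT$, $\mathcal Bw_K=0$, $w_K(0,\cdot)=0$, and estimate it by the energy inequality of Proposition \ref{Pr:ParabolicRegularity} (applied with null potential, source $g=q\,v_K$, null initial datum; the $L^2$ energy estimate there extends verbatim to an $L^2$ initial datum and an $L^2_{t,x}$ source). Combined with Cauchy--Schwarz in time this gives
\[ \sup_{t\in[0;T]}\Vert w_K(t,\cdot)\Vert_{L^2(\O)}\le C\int_0^T\Vert q\,v_K(t,\cdot)\Vert_{L^2(\O)}dt\le C\Vert q\Vert_{L^\infty(\OT)}\sqrt T\left(\iint_\OT v_K^2\right)^{1/2}, \]
and therefore $\iint_\OT w_K^2\le \Lambda^2\iint_\OT v_K^2$ with $\Lambda:=CT\Vert q\Vert_{L^\infty(\OT)}$ depending only on $T$, $\mathrm{Vol}(\O)$ and $\Vert q\Vert_{L^\infty(\OT)}$. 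The triangle inequality $\Vert v_K\Vert_{L^2(\OT)}\ge\Vert\bar v_K\Vert_{L^2(\OT)}-\Lambda\Vert v_K\Vert_{L^2(\OT)}$ then yields $\iint_\OT v_K^2\ge(1+\Lambda)^{-2}\iint_\OT\bar v_K^2$, and combining with the first paragraph closes the proof with $c_0=\dfrac{1-e^{-2T\lambda_{1,\mathcal B}}}{2(1+\Lambda)^2}$ (Dirichlet; the obvious analogue with $\lambda_{2,\mathcal B}$ in the Neumann case).

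The delicate point — and the only place the argument could break — is this last absorption step. Estimating $w_K$ directly via Proposition \ref{Pr:ParabolicRegularity} and $\Vert h_K\Vert_{L^2(\O)}=1$ would only bound $\Vert w_K\Vert_{L^2(\OT)}$ by a fixed constant, which is worthless, because $\Vert\bar v_K\Vert_{L^2(\OT)}$ itself degenerates: it can be as small as $\sim\lambda_{K,\mathcal B}^{-1/2}\to 0$ when the Fourier mass of $h_K$ escapes to arbitrarily high frequencies. What rescues the estimate is that $w_K$ has \emph{zero} initial data, so its size is controlled entirely by the source $q\,v_K$; this lets one bound it by $\Lambda\Vert v_K\Vert_{L^2(\OT)}$ and then absorb it on the left. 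A pleasant byproduct is that no localisation on a small time interval is needed — the estimate is carried out on $\OT$ at once — and that only $q\in L^\infty(\OT)$ is used, the finer hypothesis \eqref{Hyp:q} and the condition $\mathcal B q=0$ being irrelevant here.
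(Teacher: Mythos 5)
Your proof is correct, and the error-control step is genuinely different from --- and lighter than --- the one in the paper. Both arguments start from the same explicit computation for the free evolution $\bar v_K=w_{0,K}$ of $h_K$. The paper, however, aims at the stronger asymptotic equivalence $\iint_\OT v_K^2=\iint_\OT w_{0,K}^2\,(1+o(1))$: since the naive Duhamel bound on $v_K-w_{0,K}$ (with source $qw_{0,K}$) only gives an error of order $O(\Vert w_{0,K}\Vert_{L^2(\OT)})$ with a possibly large constant, which could cancel the main term, it inserts the corrector $z_{0,K}=tqw_{0,K}$ and pays for it with the extra hypotheses \eqref{Hyp:q} on $q$ (and $\mathcal Bq=0$ in the Neumann case). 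You sidestep this entirely by writing the source of $w_K=v_K-\bar v_K$ as $qv_K$ rather than $qw_{0,K}$, so that the error is controlled by $\Lambda\Vert v_K\Vert_{L^2(\OT)}$ --- the unknown itself --- and the reverse triangle inequality $\Vert\bar v_K\Vert_{L^2(\OT)}\le(1+\Lambda)\Vert v_K\Vert_{L^2(\OT)}$ closes the argument with no smallness requirement on $\Lambda$ and with $q\in L^\infty(\OT)$ only; your remark that the zero initial datum of $w_K$ is what saves the estimate is exactly the right diagnosis of the pitfall the paper also flags. This is shorter, needs fewer hypotheses, and yields a constant valid for every $K$ rather than only for $K$ large as in \eqref{Eq:EstW0K}. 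What it buys less of: the paper's finer decomposition $v_K=w_{0,K}+z_{0,K}+R_{0,K}$ with $\Vert z_{0,K}\Vert_{L^2(\OT)},\Vert R_{0,K}\Vert_{L^2(\OT)}=o\bigl(\Vert w_{0,K}\Vert_{L^2(\OT)}\bigr)$ is reused verbatim in the proofs of Lemmas \ref{Le:TimeSupport} and \ref{Le:SpaceSupport} (the terms $I_{1,K}$ and $I_{2,K}$ there), so your argument establishes the lemma as stated but does not substitute for that machinery in the later steps of the proof of Theorem \ref{Th:LaplaceDimd}.
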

\begin{proof}[Proof of Lemma \ref{Le:L2Estimate}]
Let us introduce, for any $k\in \N$, the function
\[ w_{0,K}(t,x):=\sum_{k=K}^\infty a_{K,k}\p_{k,\mathcal B}e^{-\lambda_{k,\mathcal B}t}.\] This function solves
\[ \left(\partial_t-\Delta\right)w_{0,K}=0.\]
It is expected that there should hold 
\begin{equation}\label{Heights} v_K\approx w_{0,K}\end{equation} in a certain sense. In order to formalise \eqref{Heights} we first compute explicitly
\begin{equation}\label{Eq:EstW0K} \iint_\OT w_{0,K}^2=\sum_{k=K}^\infty \frac{a_{K,k}^2}{2\lambda_{k,\mathcal B}}(1-e^{-2T\lambda_{k,\mathcal B}})\geq \frac14\sum_{k=K}^\infty \frac{a_{K,k}^2}{\lambda_{k,\mathcal B}}\end{equation}whenever $K$ is large enough to ensure that $1-e^{-2T\lambda_{K,\mathcal B}}\geq \frac12$. To control the distance between $w_{0,K}$ and $v_K$, consider the remainder term
\[ T_{0,K}:= v_K- w_{0,K}.\] It is clear that $T_{0,K}$ satisfies
\[ \partial_t T_{0,K}-\Delta T_{0,K}-qT_{0,K}=qw_{0,K}.\] But this is not yet enough. Indeed, if we were to apply Proposition \ref{Pr:ParabolicRegularity} directly, we would need to estimate
$\int_0^T\Vert qw_{0,K}\Vert_{L^2(\O)}$ but we can \emph{a priori} only bound it as 
\[ \int_0^T\Vert qw_{0,K}\Vert_{L^2(\O)}\leq \Vert q\Vert_{L^\infty(\OT)}\left(\sum_{k=K} \frac{a_{K,k}^2}{\lambda_{k,\mathcal B}}\right)^{\frac12},\] that is, by a term of order $\Vert w_{0,K}\Vert_{L^2(\OT)}$, which is not strong enough. We thus have to take more care when handling this. For this reason, introduce the function 
\[ z_{0,K}:\OT\ni (t,x)\mapsto tq(t,x)w_{0,K}(t,x)\] and define 
\[ R_{0,K}:= v_K-w_{0,K}-z_{0,K}.\]
As $z_{0,K}$ satisfies
\[ \partial_t z_{0,K}-\Delta z_{0,K}=qw_{0,K}+(\partial_tq-\Delta q) (tw_{0,K})-2t\langle \n q,\n w_{0,k}\rangle
\]
we obtain 
\[ \partial_tR_{0,K}-\Delta R_{0,K}-qR_{0,K}=\underbrace{qz_{0,K}}_{=:V_{0,K}}-\underbrace{(\partial_tq-\Delta q) (tw_{0,K})}_{=:V_{1,K}}+2\underbrace{t\langle \n q,\n w_{0,k}\rangle}_{=:V_{2,K}}.\]
Moreover, notice that 
there holds 
\[ \mathcal Bz_{0,K}=0\] which in turn implies
\[ \mathcal B R_{0,K}=0.\] Finally, we have 
\[ R_{0,K}(0,\cdot)=0\text{ in }\O\] by construction. From Proposition \ref{Pr:ParabolicRegularity} there holds, for some constant $C>0$ independent of $K$, 
\begin{multline*} \sup_{t\in [0;T]}\Vert R_{0,K}(t,\cdot)\Vert_{L^2(\O)}\leq C\left(\int_0^T \Vert V_{0,K}(t,\cdot)\Vert_{L^2(\O)}dt+\int_0^T \Vert V_{1,K}(t,\cdot)\Vert_{L^2(\O)}dt\right.\\+\left.\int_0^T \Vert V_{2,K}(t,\cdot)\Vert_{L^2(\O)}dt\right).\end{multline*} 
Now observe that 
\begin{align*} \int_0^T \Vert V_{0,K}(t,\cdot)\Vert_{L^2(\O)}dt
&=\int_0^T t\Vert q(t,\cdot)w_{0,K}(t,\cdot)\Vert_{L^2(\O)}dt
\\&\leq \Vert q\Vert_{L^\infty(\OT)}\int_0^Tt \Vert w_{0,K}(t,\cdot)\Vert_{L^2(\O)}dt
\\&\leq \Vert q\Vert_{L^\infty(\OT)}\left(\iint_\OT t^2 w_{0,K}^2\right)^{\frac12}
\\&\leq \Vert q\Vert_{L^\infty}\sqrt{\sum_{k=K}^\infty a_{K,k}^2\int_0^T t^2e^{-2t\lambda_{k,\mathcal B}}dt}
\\&\leq \frac{M}{{\lambda_{K,\mathcal B}}}\sqrt{\sum_{k=K}^\infty \frac{a_{K,k}^2}{\lambda_{k,\mathcal B}}}.
\end{align*}
In the last step, we applied Proposition \ref{Pr:Laplace} with $m=3$. We have thus proved
 \begin{equation}\label{Eq:EstV0K}
\int_0^T \Vert V_{0,K}(t,\cdot)\Vert_{L^2(\O)}dt\leq \frac{C}{{\lambda_{K,\mathcal B}}}\sqrt{\sum_{k=K}\frac{a_{K,k}^2}{\lambda_{k,\mathcal B}}}.\end{equation}
Similarly, we can estimate $\int_0^T \Vert V_{1,K}(t,\cdot)\Vert_{L^2(\O)}dt$. Define $Q:=\partial_tq-\Delta q$. Then there holds
\begin{align*}
\int_0^T \Vert V_{1,K}(t,\cdot)\Vert_{L^2(\O)}dt&=\int_0^T \Vert Q(t,\cdot) tw_{0,K}(t,\cdot)\Vert_{L^2(\O)}dt
\\&\leq C\int_0^T t \Vert Q(t,\cdot)\Vert_{L^{r_0}(\O)}\Vert w_{0,K}(t,\cdot)\Vert_{L^{p_0}(\O)}
\\&\text{ from  the H\"{o}lder inequality with $1/{r_0}+1/{p_0}=1/2$}
\\&\leq C\sqrt{\int_0^T \Vert Q(t,\cdot)\Vert_{L^{r_0}(\O)}^2 }\sqrt{\int_0^T t^2 \Vert  w_{0,K}\Vert_{L^{p_0}(\O)}^2}
\\&\text{ from the Cauchy-Schwarz inequality}.
\end{align*}
We now choose $p_0>2$ such that 
\[ W^{1,2}(\O)\hookrightarrow L^{p_0}(\O)\] and fix the corresponding exponent $r_0$. Then, up to a constant $C>0$ we have 
\begin{align*}
\int_0^T \Vert V_{1,K}(t,\cdot)\Vert_{L^2(\O)}dt&\leq C\sqrt{\int_0^T \Vert Q(t,\cdot)\Vert_{L^{r_0}(\O)}^2 }\sqrt{\int_0^T t^2 \Vert  w_{0,K}\Vert_{L^{p_0}(\O)}^2}
\\&\leq C\sqrt{\int_0^T \Vert Q(t,\cdot)\Vert_{L^{r_0}(\O)}^2 }\sqrt{\int_0^T t^2 \Vert  \n w_{0,K}\Vert_{L^{2}(\O)}^2}.
\end{align*}
Now observe that by the Jensen inequality we have, up to a constant  still denoted $C$ for notational convenience
\begin{equation}\label{Eq:Ajout1} \int_0^T \Vert Q(t,\cdot)\Vert_{L^{r_0}(\O)}^2dt\leq C\left(\iint_\OT |Q|^{2r_0}\right)^{\frac1{r_0}}=C\Vert Q\Vert_{L^{2r_0}(0,T;L^{2r_0})}^{4}=C_{r_0}<\infty.\end{equation} In the last inequality we used Assumption \eqref{Hyp:q}. All in all, up to a multiplicative constant once again denoted by $C$, we have obtained
\begin{align*}
\int_0^T \Vert V_{1,K}(t,\cdot)\Vert_{L^2(\O)}dt&\leq C \sqrt{\int_0^T t^2 \Vert  \n w_{0,K}\Vert_{L^{2}(\O)}^2dt}
\\&=C \sqrt{\sum_{k=K}^\infty \lambda_{k,\mathcal B}a_{K,k}^2\int_0^T t^2e^{-2t\lambda_{k,\mathcal B}}dt}
\\&\leq \frac{C}{\sqrt{\lambda_{K,\mathcal B}}}\sqrt{\sum_{k=K}\frac{a_{K,k}^2}{\lambda_{k,\mathcal B}}}
\end{align*}
We have thus obtained
\begin{equation}\label{Eq:EstV1K}
\int_0^T \Vert V_{1,K}(t,\cdot)\Vert_{L^2(\O)}dt\leq  \frac{C}{\sqrt{\lambda_{K,\mathcal B}}}\sqrt{\sum_{k=K}\frac{a_{K,k}^2}{\lambda_{k,\mathcal B}}}.
\end{equation}
Let us finally estimate $V_{2,K}$. Define $Q_1:=\n q$. We need to estimate
\begin{equation}\label{Eq:Ajout2}\textcolor{black}{
\int_0^T t\Vert\langle Q_1,\n w_{0,k}\rangle\Vert_{L^2(\O)}dt}.\end{equation}  Applying the same H\"{o}lder and Cauchy-Schwarz inequalities as above, we have 
\begin{align*}
\int_0^T t\Vert\langle Q_1,\n w_{0,k}\rangle\Vert_{L^2(\O)}dt&\leq \int_0^T t \Vert Q_1(t,\cdot)\Vert_{L^{r_0}(\O)}\Vert \n w_{0,K}(t,\cdot)\Vert_{L^{p_0}(\O)}dt
\\&\leq C \sqrt{\int_0^T t^2 \Vert \n w_{0,K}(t,\cdot)\Vert_{L^{p_0}(\O)}^2}
\end{align*}
where $1/{r_0}+1/{p_0}=1/2$.
 It remains to estimate the quantity
  \[ \int_0^T t^2\Vert\n w_{0,K}(t,\cdot)\Vert_{L^{p_0}(\O)}^2dt\] for some $p'>2$. However, by  the fractional Sobolev embedding \cite[Theorem 7.57]{Adams} (see also \cite[Theorem 3.4, Lemma 4.11]{chandler-wilde_hewett_moiola_2015}) $H^{1+\gamma}\hookrightarrow W^{1,p_0}(\O)$, for some $\gamma\in ]0;1[$  and $p_0>2$, we have 
\[ \Vert \n w_{0,k}(t,\cdot)\Vert_{L^{p_0}(\O)}^2\leq \Vert w_{0,k}(t,\cdot)\Vert_{H^{1+\gamma}(\O)}^2=\sum_{k=K}^\infty a_{K,k}^2 \lambda_{k,\mathcal B}^{1+\gamma}e^{-t\lambda_{k,\mathcal B}}\] so that the last term can be estimated as 
\[
\int_0^T t^2\sum_{k=K}^\infty a_{K,k}^2 \lambda_{k,\mathcal B}^{1+\gamma}e^{-t\lambda_{k,\mathcal B}}dt\sim_{K\to \infty}\sum_{k=K}^\infty\frac{a_{K,k}^2}{\lambda_{k,\mathcal B}^{2-\gamma}}
\]whence  we obtain 
\begin{equation}\label{Eq:EstV2K}\textcolor{black}{
\int_0^T t\Vert \langle \n Q_1,\n w_{0,k}\rangle\Vert_{L^2(\O)}dt=\int_0^T \Vert V_{2,K}(t,\cdot)\Vert_{L^2(\O)}dt\leq \frac{C}{\lambda_{K,\mathcal B}^{\frac{1-\gamma}2}}\sqrt{\sum_{k=K}^\infty \frac{a_{K,k}^2}{\lambda_{k,\mathcal B}}}}
\end{equation}
\begin{remark}\label{Re:Add}
It should be noted that the only property of the potential $Q_1=\n q$ we used to prove \eqref{Eq:EstV2K} was that $q$ satisfies \eqref{Hyp:q}.
\end{remark}
Summing estimates \eqref{Eq:EstV0K}-\eqref{Eq:EstV1K}-\eqref{Eq:EstV2K} we get that for some constant $C$ and some $\beta>0$ there holds 
\begin{equation}\label{Eq:EstR0K}
\sup_{t\in [0;T]}\Vert R_{0,K}(t,\cdot)\Vert_{L^2(\O)}\leq \frac{C}{\lambda_{K,\mathcal B}^\beta}\sqrt{\sum_{k=K}^\infty \frac{a_{K,k}^2}{\lambda_{k,\mathcal B}}}.
\end{equation}
Furthermore,  remembering that 
$z_{0,K}=tq(t,\cdot)w_{0,K}$ we have 
\begin{align}\label{Eq:ChainZ0}
\iint_\OT z_{0,K}^2&\leq \Vert q\Vert_{L^\infty(\OT)}\iint_\OT t^2 w_{0,K}^2dtdx
\\&=\Vert q\Vert_{L^\infty(\OT)}\sum_{k=K}^\infty \int_0^T t^2 a_{K,k}^2 e^{-2t\lambda_{K,k}}dt
\\&=2\Vert q\Vert_{L^\infty(\OT)}\sum_{k=K}^\infty \frac{a_{K,k}^2}{\lambda_{K,k}^3}
\\&=\underset{K\to \infty}o\left(\Vert w_{0,K}\Vert_{L^2(\OT)}^2\right).
\end{align}

 We turn back to the function $v_K$. Developing the square root we obtain
\begin{align*}
\iint_{\OT}v_K^2&=\iint_\OT w_{0,K}^2+\iint_\OT (R_{0,K}+z_{0,K})^2+2\iint_\OT w_{0,K}(R_{0,K}+z_{0,K})
\end{align*}
From \eqref{Eq:EstR0K}-\eqref{Eq:ChainZ0} and the algebraic inequality $|a+b|^2\leq 2(a^2+b^2)$ we deduce
\[ \iint_\OT (R_{0,K}+z_{0,K})^2\leq 2 \left(\iint_\OT R_{0,K}^2+\iint_\OT z_{0,K}^2\right)=\underset{K\to \infty}o\left(\Vert w_{0,K}\Vert_{L^2(\OT)}^2\right).\] Similarly, by the H\"{o}lder inequality,
\[ \iint_\OT w_{0,K}(R_{0,K}+z_{0,K})=\underset{K\to \infty}o\left(\Vert w_{0,K}\Vert_{L^2(\OT)}^2\right).\] Thus
\[\iint_\OT v_K^2=\iint_\OT w_{0,K}^2+\underset{K\to \infty}o\left(\Vert w_{0,K}\Vert_{L^2(\OT)}^2\right).\] As 
\[ \iint_\OT w_{0,K}^2\sim_{K\to \infty}\sum_{k=K}^\infty\frac{a_{K,k}^2}{\lambda_{k,\mathcal B}}\] the proof is finished.
\end{proof}

\subsection{Step 3: Controlling the support in time} The goal of this paragraph is the following lemma:
\begin{lemma}\label{Le:TimeSupport}
For any closure point $\nu_\infty$ of the sequence $\left\{\nu_K\right\}_{K\in \N}$ (defined in the statement of Theorem \ref{Th:LaplaceDimd}) there holds 
 \begin{equation}\label{Eq:TimeSupport} \mathrm{supp}(\nu_\infty)\subset \{t=0\}\times \O.\end{equation}
\end{lemma}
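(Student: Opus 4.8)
\textbf{Proof strategy for Lemma \ref{Le:TimeSupport}.}
The plan is to show that the mass of $\nu_K$ concentrates near $t=0$ as $K\to\infty$. Concretely, I would fix $\tau\in(0;T)$ and estimate $\iint_{(\tau;T)\times\O}v_K^2$ from above, then compare it to the lower bound $\iint_\OT v_K^2\geq c_0\sum_{k\geq K}a_{K,k}^2/\lambda_{k,\mathcal B}$ from Lemma \ref{Le:L2Estimate}. If the ratio tends to $0$, then for any test function $\p\in\mathscr C^0([0;T]\times\overline\O)$ supported in $\{t\geq\tau\}$ we get $\langle\nu_K,\p\rangle\to 0$, and since $\tau$ is arbitrary this forces $\mathrm{supp}(\nu_\infty)\subset\{t=0\}\times\O$ for every closure point.

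The key step is the upper bound on $v_K$ away from $t=0$. I would reuse the decomposition $v_K=w_{0,K}+z_{0,K}+R_{0,K}$ already established in the proof of Lemma \ref{Le:L2Estimate}, where $w_{0,K}=\sum_{k\geq K}a_{K,k}\p_{k,\mathcal B}e^{-\lambda_{k,\mathcal B}t}$. For the principal term, orthonormality of the $\p_{k,\mathcal B}$ gives
\[
\iint_{(\tau;T)\times\O}w_{0,K}^2=\sum_{k=K}^\infty a_{K,k}^2\int_\tau^T e^{-2t\lambda_{k,\mathcal B}}\,dt\leq \frac{e^{-2\tau\lambda_{K,\mathcal B}}}{2\lambda_{K,\mathcal B}}\sum_{k=K}^\infty a_{K,k}^2=\frac{e^{-2\tau\lambda_{K,\mathcal B}}}{2\lambda_{K,\mathcal B}},
\]
using $\sum_k a_{K,k}^2=1$ and monotonicity of $t\mapsto e^{-2t\lambda_{k,\mathcal B}}$ together with $\lambda_{k,\mathcal B}\geq\lambda_{K,\mathcal B}$. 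Since $\sum_{k\geq K}a_{K,k}^2/\lambda_{k,\mathcal B}\geq 1/\lambda_{K,\mathcal B}\cdot\sum_{k\geq K}a_{K,k}^2 \cdot (\lambda_{K,\mathcal B}/\lambda_{k,\mathcal B})$ is not immediately comparable, I instead note the cruder but sufficient bound: the denominator in $\nu_K$ satisfies $\iint_\OT v_K^2\geq c_0\sum_{k\geq K}a_{K,k}^2/\lambda_{k,\mathcal B}$, and I need the numerator over $(\tau;T)$ to be $o$ of this. For that it suffices that $e^{-2\tau\lambda_{K,\mathcal B}}/\lambda_{K,\mathcal B}=o\!\left(\sum_{k\geq K}a_{K,k}^2/\lambda_{k,\mathcal B}\right)$; since $\sum_{k\geq K}a_{K,k}^2/\lambda_{k,\mathcal B}\geq (1-e^{-2T\lambda_{K,\mathcal B}})^{-1}$-type lower bounds are awkward, the clean route is: $\sum_{k\geq K}a_{K,k}^2/\lambda_{k,\mathcal B}\geq \lambda_{\max}^{-1}$ fails in general, so instead observe $\sum_{k\geq K}a_{K,k}^2/\lambda_{k,\mathcal B}\geq e^{2\tau\lambda_{K,\mathcal B}}\cdot\frac{e^{-2\tau\lambda_{K,\mathcal B}}}{\lambda_{K,\mathcal B}}$ trivially for the term $k=K$ only if $a_{K,K}$ is bounded below, which we cannot assume. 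The robust fix is to bound the ratio termwise: for each $k\geq K$, $\int_\tau^T e^{-2t\lambda_{k,\mathcal B}}dt\leq e^{-2\tau\lambda_{k,\mathcal B}}\cdot\frac{1}{2\lambda_{k,\mathcal B}}\leq \left(\sup_{s\geq\lambda_{K,\mathcal B}}e^{-2\tau s}\right)\cdot\frac{1}{2\lambda_{k,\mathcal B}}=\frac{e^{-2\tau\lambda_{K,\mathcal B}}}{2\lambda_{k,\mathcal B}}$, hence
\[
\iint_{(\tau;T)\times\O}w_{0,K}^2\leq \frac{e^{-2\tau\lambda_{K,\mathcal B}}}{2}\sum_{k=K}^\infty\frac{a_{K,k}^2}{\lambda_{k,\mathcal B}}=e^{-2\tau\lambda_{K,\mathcal B}}\cdot\frac{2}{c_0}\cdot\Big(\tfrac{c_0}{4}\sum_{k\geq K}\tfrac{a_{K,k}^2}{\lambda_{k,\mathcal B}}\Big),
\]
which is $o\!\left(\iint_\OT v_K^2\right)$ because $\lambda_{K,\mathcal B}\to\infty$.

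It remains to handle the error terms $z_{0,K}$ and $R_{0,K}$ on $(\tau;T)\times\O$; but here the estimates \eqref{Eq:EstR0K} and \eqref{Eq:ChainZ0} already give $\iint_\OT(z_{0,K}^2+R_{0,K}^2)=o\!\left(\|w_{0,K}\|_{L^2(\OT)}^2\right)=o\!\left(\sum_{k\geq K}a_{K,k}^2/\lambda_{k,\mathcal B}\right)=o\!\left(\iint_\OT v_K^2\right)$ over the whole cylinder, a fortiori over $(\tau;T)\times\O$. Combining via $|a+b+c|^2\leq 3(a^2+b^2+c^2)$ yields $\iint_{(\tau;T)\times\O}v_K^2=o\!\left(\iint_\OT v_K^2\right)$, i.e. $\nu_K\big((\tau;T)\times\overline\O\big)\to 0$. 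For a test function $\p$ with $\mathrm{supp}(\p)\subset\{t\geq\tau\}$ this gives $|\langle\nu_K,\p\rangle|\leq\|\p\|_\infty\,\nu_K((\tau;T)\times\overline\O)\to 0$, so $\langle\nu_\infty,\p\rangle=0$ along any convergent subsequence; letting $\tau\downarrow 0$ and using that any continuous $\p$ supported away from $\{t=0\}$ is supported in some $\{t\geq\tau\}$, we conclude $\mathrm{supp}(\nu_\infty)\subset\{t=0\}\times\O$. The main obstacle is purely bookkeeping: making sure the upper bound away from $t=0$ is measured against the \emph{same} quantity $\sum_{k\geq K}a_{K,k}^2/\lambda_{k,\mathcal B}$ that controls the denominator, which is why the termwise $e^{-2\tau\lambda_{k,\mathcal B}}\leq e^{-2\tau\lambda_{K,\mathcal B}}$ split is the crucial trick rather than a naive $L^\infty$-in-$k$ estimate.
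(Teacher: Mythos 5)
Your proposal is correct and follows essentially the same route as the paper: the same decomposition $v_K=w_{0,K}+z_{0,K}+R_{0,K}$, the same termwise bound $\int_\tau^T e^{-2t\lambda_{k,\mathcal B}}\,dt\leq e^{-2\tau\lambda_{K,\mathcal B}}/(2\lambda_{k,\mathcal B})$ giving $\iint_{(\tau;T)\times\O}w_{0,K}^2\leq Ce^{-2\tau\lambda_{K,\mathcal B}}\sum_{k\geq K}a_{K,k}^2/\lambda_{k,\mathcal B}$, and the same comparison with the lower bound of Lemma \ref{Le:L2Estimate} before passing to test functions supported in $\{t\geq\tau\}$. The only cosmetic difference is that you absorb the cross terms via $|a+b+c|^2\leq 3(a^2+b^2+c^2)$ where the paper keeps an explicit cross term $I_{2,K}$; this changes nothing of substance.
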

As we shall see, this is an almost straightforward consequence of the computations carried out in the proof of Lemma \ref{Le:L2Estimate}.
\begin{proof}[Proof of Lemma \ref{Le:TimeSupport}]
 From Lemma \ref{Le:L2Estimate} we know that for some constant $c_0>0$ we have 
 \[ \iint_{(0;T)\times \O}v_K^2\geq c_0\sum_{k=K}^\infty \frac{a_{K,k}^2}{\lambda_{k,\mathcal B}}.\] To prove \eqref{Eq:TimeSupport} it suffices to prove that, for any $\e>0$, 
 \[ \iint_{(\e;T)\times \O}v_K^2=\underset{K\to \infty}o\left( \sum_{k=K}^\infty \frac{a_{K,k}^2}{\lambda_{k,\mathcal B}} \right).\]  Using the same notations as in the proof of Lemma \ref{Le:L2Estimate} we have
 \begin{align*}
 \iint_{(\e;T)\times \O}v_K^2&=\iint_{(\e;T)\times \O}(v_K-w_{0,K}-z_{0,K})^2&(=:I_{1,K})
 \\&+2\iint_{(\e;T)\times \O} (v_{0,K}-w_{0,K}-z_{0,K}) (w_{0,K}+z_{0,K}) &(=:I_{2,K})
 \\&+\iint_{(\e;T)\times \O}({w_{0,K}}+z_{0,K})^2 &(=:I_{3,K}).
 \end{align*}
 As in the proof of Lemma \ref{Le:L2Estimate} we have 
 \[ I_{1,K}\,, I_{2,K}=\underset{K\to \infty}o\left( \sum_{k=K}^\infty \frac{a_{K,k}^2}{\lambda_{k,\mathcal B}} \right).\] It remains to estimate $I_{3,K}$. However, up to a multiplicative constant $C$ we have
 \begin{align*}
 I_{3,K}&=\iint_{(\e,T)\times \O}( w_{0,K}+z_{0,K})^2
 \\&\leq C \left(\iint_{(\e;T)\times \O}w_{0,K}^2+\iint_{(\e;T)\times \O}z_{0,K}^2\right)
 \\&\leq C \left(\iint_{(\e;T)\times \O}w_{0,K}^2+\iint_{(0;T)\times \O}z_{0,K}^2\right)
 \\&\leq C \left( \iint_{(\e;T)\times \O}w_{0,K}^2+\underset{K\to \infty}o(\Vert w_{0,K}\Vert_{L^2(\OT)}^2)\right)&\text{ from \eqref{Eq:ChainZ0}}
 \end{align*}
 Moreover, for a constant $C$
 \begin{align*}
 \iint_{(\e;T)\times \O} w_{0,K}^2&=\sum_{k=K}^\infty a_{K,k}^2\int_\e^T e^{-t\lambda_k}dt
 \\&\leq C e^{-\e \lambda_{K,\mathcal B}}\sum_{k=K}\frac{a_{K,k}^2}{\lambda_{k,\mathcal B}}
 \\&=\underset{K\to \infty}o\left(\Vert w_{0,K}\Vert_{L^2(\OT)}^2)\right)
 \end{align*}
 so that 
 \[ I_{3,K}=\underset{K\to \infty}o\left(\Vert w_{0,K}\Vert_{L^2(\OT)}^2\right).\] Summarising, we have obtained 
 \[ \iint_{(\e;T)\times \O} v_K^2=\underset{K\to \infty}o(\Vert w_{0,K}\Vert_{L^2(\OT)}^2)=\underset{K\to \infty}o\left(\Vert v_K\Vert_{L^2(\OT)}^2\right).\] Thus, for any test function $\phi\in \mathscr C^0_c([\e;T]\times \O)$, (the limit is taken along a subsequence)
 \begin{align*} \langle \nu_\infty,\phi\rangle&=\lim_{K\to \infty}\iint_\OT \nu_K\phi
 \\&=\lim_{K\to \infty}\iint_{(\e;T)\times \O} \nu_K \phi
 \\&\leq \Vert \phi\Vert_{L^\infty(\OT)}\lim_{K\to \infty}\frac{\iint_{(\e;T)\times \O} v_K^2}{\iint_\OT v_K^2}
 \\&=0.\end{align*}
 The conclusion follows.
\end{proof}

\subsection{Step 4: Controlling the support in space} The goal of this paragraph is the following result:
\begin{lemma}\label{Le:SpaceSupport}For any closure point $\nu_\infty$ of the sequence $\left\{\nu_K\right\}_{K\in \N}$ (defined in the statement of Theorem \ref{Th:LaplaceDimd}) there holds 
 \begin{equation}\label{Eq:SpaceSupport} \mathrm{supp}(\nu_\infty)\subset[0;T]\times \omega.\end{equation}\end{lemma}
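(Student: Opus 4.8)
The plan is to show that the mass of $v_K^2$ that sits away from $\omega$ in space is negligible compared to $\iint_\OT v_K^2$, which by Lemma \ref{Le:L2Estimate} is bounded below by $c_0\sum_{k\ge K}a_{K,k}^2/\lambda_{k,\mathcal B}$. The subtle point is that, unlike the time-localisation in Lemma \ref{Le:TimeSupport}, the initial datum $h_K$ is \emph{not} concentrated in space by a decaying exponential; the concentration in space has to come from the \emph{joint} constraint $\mathrm{supp}(h_K)\subset\omega$ together with the time-concentration already proven. So I would argue as follows. Fix $\e>0$ and let $\psi\in\mathscr C^\infty(\overline\O)$ be a cutoff with $\psi\equiv 1$ on $\O\setminus\omega_\e$ and $\psi\equiv 0$ on a neighbourhood of $\omega$, where $\omega_\e$ is a small enlargement of $\omega$; because $\omega$ is closed such a $\psi$ exists, and one may also impose $\mathcal B\psi=0$ in the Neumann case. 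It suffices to prove that for any such $\psi$,
\[ \iint_\OT \psi^2 v_K^2 = \underset{K\to\infty}{o}\Big(\sum_{k=K}^\infty\frac{a_{K,k}^2}{\lambda_{k,\mathcal B}}\Big),\]
since then $\langle \nu_\infty,\phi\rangle=0$ for every $\phi\in\mathscr C^0_c(([0;T]\times\omega)^c)$ (dominating $\phi$ by $\Vert\phi\Vert_\infty\psi^2$ for a suitable $\psi$), and combined with Lemma \ref{Le:TimeSupport} this gives $\mathrm{supp}(\nu_\infty)\subset\{t=0\}\times\omega$.

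To estimate $\iint_\OT\psi^2 v_K^2$, I would replace $v_K$ by the approximation $w_{0,K}=\sum_{k\ge K}a_{K,k}\p_{k,\mathcal B}e^{-\lambda_{k,\mathcal B}t}$ constructed in Step 2: by \eqref{Eq:EstR0K} and \eqref{Eq:ChainZ0} we have $\Vert v_K-w_{0,K}\Vert_{L^2(\OT)}^2=o(\sum_{k\ge K}a_{K,k}^2/\lambda_{k,\mathcal B})$, so it is enough to bound $\iint_\OT\psi^2 w_{0,K}^2$. Expanding,
\[ \iint_\OT \psi^2 w_{0,K}^2=\sum_{k,k'=K}^\infty a_{K,k}a_{K,k'}\Big(\int_0^T e^{-t(\lambda_{k,\mathcal B}+\lambda_{k',\mathcal B})}dt\Big)\int_\O \psi^2\p_{k,\mathcal B}\p_{k',\mathcal B}.\]
The time integral is $\le 1/(\lambda_{k,\mathcal B}+\lambda_{k',\mathcal B})$, and the key is to exploit the spatial factor $b_{k,k'}:=\int_\O\psi^2\p_{k,\mathcal B}\p_{k',\mathcal B}$. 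Here I would use the orthogonality/regularity trick: since $\psi$ vanishes near $\omega$ and $h_K$ is supported in $\omega$, repeated integration by parts against $-\Delta$ gains powers of $1/(\lambda_{k,\mathcal B}+\lambda_{k',\mathcal B})$ at the cost of bringing down $\psi$-derivatives — i.e. for any integer $N$,
\[ (\lambda_{k,\mathcal B}-\lambda_{k',\mathcal B})b_{k,k'}=\int_\O\big((-\Delta\psi^2)\p_{k,\mathcal B}\p_{k',\mathcal B}-2\langle\n\psi^2,\n\p_{k,\mathcal B}\rangle\p_{k',\mathcal B}\big),\]
and iterating a symmetrised version of this identity together with the fact that, by the argument in Step 3, the whole contribution localises at $t=0$ where $w_{0,K}(0,\cdot)=h_K$ is supported in $\{\psi=0\}$. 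Concretely, the cleanest route is: use Lemma \ref{Le:TimeSupport}'s computation to restrict to $t\in(0;\e')$, on which $w_{0,K}(t,\cdot)$ is $L^2$-close to $h_K$ uniformly (by parabolic smoothing estimates as in Proposition \ref{Pr:ParabolicRegularity} applied to $\partial_t w_{0,K}-\Delta w_{0,K}=0$, giving $\Vert w_{0,K}(t,\cdot)-h_K\Vert_{L^2(\O)}\to 0$ in an averaged sense weighted to keep the $1/\lambda_{k,\mathcal B}$ scaling), and then $\int_\O\psi^2 h_K^2=0$ since $\psi h_K\equiv 0$. The $o(\cdot)$ errors are controlled exactly as in Steps 2–3.

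I expect the main obstacle to be making the last step rigorous at the right \emph{scale}: one needs $\iint_\OT\psi^2 w_{0,K}^2$ to be not merely $o(1)$ but $o\big(\sum_{k\ge K}a_{K,k}^2/\lambda_{k,\mathcal B}\big)$, and $\sum a_{K,k}^2/\lambda_{k,\mathcal B}$ can itself tend to $0$ (like $1/\lambda_{K,\mathcal B}$). So a crude Cauchy–Schwarz on the double sum $\sum_{k,k'}a_{K,k}a_{K,k'}|b_{k,k'}|/(\lambda_{k,\mathcal B}+\lambda_{k',\mathcal B})$ will not obviously beat the lower bound; one genuinely has to extract the extra decay from the fact that $h_K$ and $\psi$ have disjoint supports, via the commutator identities above (or, equivalently, via a finite-speed-type smoothing estimate: $v_K$ and hence $w_{0,K}$ stays essentially supported in $\omega_\e$ for $t$ small because the datum is in $\omega$ and the heat semigroup moves mass only by $O(\sqrt t)$, quantitatively $\iint_{(0;\e')\times(\O\setminus\omega_\e)}w_{0,K}^2\le C e^{-c/\e'}\sum a_{K,k}^2/\lambda_{k,\mathcal B}$ by a weighted energy estimate with weight $e^{\mathrm{dist}(x,\omega)^2/(4t)}$). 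Once that quantitative "heat mass does not escape $\omega_\e$ in short time, uniformly in $K$" estimate is in hand, combining it with Lemma \ref{Le:TimeSupport} to discard $t>\e'$ closes the argument, and letting $\e\to 0$ yields \eqref{Eq:SpaceSupport}.
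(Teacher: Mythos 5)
Your overall strategy --- reduce to the free evolution $w_{0,K}$ and show that the $L^2(\OT)$ mass it leaves outside a neighbourhood of $\omega$ is $o\big(\sum_{k\ge K}a_{K,k}^2/\lambda_{k,\mathcal B}\big)$ --- targets the right quantity, and your reduction from $v_K$ to $w_{0,K}$ via the Step 2 estimates is sound. But the step you yourself flag as the obstacle is the actual content of the lemma, and neither of the two mechanisms you propose closes it. The commutator identity only yields a factor $(\lambda_{k,\mathcal B}-\lambda_{k',\mathcal B})^{-1}$, which degenerates on and near the diagonal $\lambda_{k,\mathcal B}\approx\lambda_{k',\mathcal B}$, precisely where the double sum carries its mass. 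The Gaussian/finite-speed estimate produces a bound of the form $C_\delta e^{-c/\e'}$ (or, after optimising the splitting time against the decay $\Vert w_{0,K}(t,\cdot)\Vert_{L^2(\O)}^2\le e^{-2\lambda_{K,\mathcal B}t}$, something like $e^{-c\delta\sqrt{\lambda_{K,\mathcal B}}}/\lambda_{K,\mathcal B}$), which is $o(1/\lambda_{K,\mathcal B})$ but \emph{not} $o\big(\sum_{k\ge K}a_{K,k}^2/\lambda_{k,\mathcal B}\big)$: the theorem allows the coefficients $a_{K,k}$ to concentrate at frequencies arbitrarily large compared to $\lambda_{K,\mathcal B}$, in which case the reference quantity $\sum_{k\ge K}a_{K,k}^2/\lambda_{k,\mathcal B}$ can be smaller than $e^{-\delta^2\lambda_{K,\mathcal B}}$ and your error term no longer beats it. Any argument here must gain relative to each $1/\lambda_{k,\mathcal B}$ separately, uniformly over the distribution of spectral mass.

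That uniform gain is exactly what the paper's proof supplies, and it is the missing mechanism. Instead of estimating the escaped mass directly, one takes a cutoff $\theta\in\mathscr C^\infty_c(\O)$ with $\theta h_K=h_K$ and $\theta\equiv 0$ on $\overline F$ (the region carrying the test function), and proves $\Vert v_K-\theta w_{0,K}\Vert_{L^2(\OT)}^2=o\big(\sum_{k\ge K}a_{K,k}^2/\lambda_{k,\mathcal B}\big)$. The error solves a parabolic equation with sources $2\langle \n\theta,\n w_{0,K}\rangle+(\Delta\theta+q\theta)w_{0,K}$, and each source is absorbed by an explicit time-weighted corrector ($tGw_{0,K}$, then $t\langle\n\theta,\n w_{0,K}\rangle$, then $\tfrac{t^2}{2}\,\n^2\theta\odot\n^2 w_{0,K}$); by the Laplace method, $\int_0^T t^m e^{-2\lambda t}\,dt\sim C_m\lambda^{-(m+1)}$, so every extra factor of $t$ converts into an extra factor $\lambda_{k,\mathcal B}^{-1}$ frequency by frequency, which is why the estimate holds regardless of where the $a_{K,k}$ put their mass. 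Since $\theta w_{0,K}\equiv 0$ on $F$, the conclusion follows. If you prefer to keep your formulation with the complementary cutoff $\psi$, you would still need this corrector cascade (with $q=0$) to prove $\iint_\OT\psi^2 w_{0,K}^2=o\big(\sum_{k\ge K}a_{K,k}^2/\lambda_{k,\mathcal B}\big)$; as written, your proposal correctly identifies the difficulty but does not resolve it.
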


\begin{proof}[Proof of Lemma \ref{Le:SpaceSupport}]
To prove \eqref{Eq:SpaceSupport} it suffices to prove the following: for any open set $F\subset \O$ such that $\mathrm{dist}(\overline F,\omega)>0$ (remember that $\omega$ is closed), for any $\phi \in \mathscr C^0([0;T]\times \O)$ such that for any $t$ $\phi(t,\cdot)\in \mathscr C^0_c(F)$, there holds 
\[ \langle \nu_\infty,\phi\rangle=0.\] Here $\nu_\infty$ is a closure point of the sequence $\left\{\nu_K\right\}_{K\in \N}$. Hence, fix an open set $F\subset\O$ such that $\mathrm{dist}(\overline F,\omega)>0$. We consider a smooth function $\theta\in \mathscr C^\infty_c(\O)$ such that 
\[ \theta h_K=h_K.\] This amounts to requiring that $\mathrm{supp}(h_K)\subset \{\theta=1\}$. Furthermore, we require that 
\[ \theta\equiv 0\text{ in }\overline F.\]

We now look for a two-scale like asymptotic expansion of the solution $v_K$ in terms of $\theta$. Introduce (with the notations of Lemma \ref{Le:L2Estimate})  
\[\eta_{0,K,\theta}:=\theta(x)\sum_{k=K}^\infty a_{K,k}\p_{k,\mathcal B}e^{-t\lambda_{k,\mathcal B}}=\theta(x)w_{0,K}(t,x)\] and
\[ R_{0,K,\theta}:=v_K-\eta_{0,K,\theta}.\] 
The function $R_{0,K,\theta}$ satisfies 
\[ \partial_t R_{0,K,\theta}-\Delta R_{0,K,\theta}-qR_{0,K,\theta}=2\langle \n \theta,\n w_{0,K}\rangle+(\Delta \theta)w_{0,K}+q\eta_{0,K,\theta}.\]
Define 
\[ G:=\Delta \theta+q\theta.\] The equation on $R_{0,K,\theta}$ rewrites 
\[ \partial_t R_{0,K,\theta}-\Delta R_{0,K,\theta}= 2\langle \n \theta, \n w_{0,K}\rangle+ G w_{0,K}.\]
We can hence split $R_{0,K,\theta}$ as 
\[ R_{0,K,\theta}=r_{1,K,\theta}+2r_{2,K,\theta}\] where 
\[
\begin{cases}
\partial_t r_{1,K,\theta}-\Delta r_{1,K,\theta}=G w_{0,K}&\text{ in }\OT\,, 
\\ \partial_t r_{2,K,\theta}-\Delta r_{2,K,\theta}=\langle \n \theta,\n w_{0,K}\rangle&\text{ in }\OT\,, 
\\ \mathcal B r_{j,K,\theta}=0&\text{ on }(0;T)\times \partial \O\,, \quad (j=1,2),
\\r_{j,K,\theta}(0,\cdot)=0&\text{ in }\O\,, \quad (j=1,2).
\end{cases}\]
We estimate $r_{1,K,\theta}$ and $r_{2,K,\theta}$ separately.
\paragraph{Estimate on $r_{1,K,\theta}$}
Introducing 
\[z_{1,K,\theta}:= tGw_{0,K}\] we show, exactly as in the proof of Lemma \ref{Le:L2Estimate}, that 
\[ \sup_{t\in [0;T]}\Vert r_{1,K,\theta}(t,\cdot)-z_{1,K,\theta}(t,\cdot)\Vert_{L^2(\O)}=\underset{K\to \infty}o\left(\sum_{k=K}^\infty \frac{a_{K,k}^2}{\lambda_{k,\mathcal B}}\right).
\]
Indeed, it suffices to observe that with the assumptions on $q$, and as $\theta\in\mathscr C^\infty_c(\O)$, $G$ also satisfies Assumption \eqref{Hyp:q}. Furthermore, for any $t\in [0;T]$, 
\begin{align*}
\Vert z_{1,K,\theta}(t,\cdot)\Vert_{L^2(\OT)}&\leq  \Vert G\Vert_{L^\infty(\O)}\sqrt{\sum_{k=K}^\infty a_{K,k}^2 \int_0^T t^2e^{-2t\lambda_{k,\mathcal B}}}
\\&=\underset{K\to \infty}o\left(\sqrt{\sum_{k=K}^\infty \frac{a_{K,k}^2}{\lambda_{k,\mathcal B}}}\right).
\end{align*}
Thus, 
\[ \Vert r_{1,K,\theta}\Vert_{L^2(\OT)}=\underset{K\to \infty}o\left(\sqrt{\sum_{k=K}^\infty \frac{a_{K,k}^2}{\lambda_{k,\mathcal B}}}\right).\]

\paragraph{Estimate on $r_{2,K,\theta}$}
Let us first reason heuristically.
Formally, we should have 
\[ r_{2,K,\theta}\approx t\langle \nabla\theta,\n w_{0,K}\rangle= \left\langle \nabla \theta,t\sum_{k=K}^\infty a_{K,k} \n \p_ke^{-t\lambda_{k,\mathcal B}}\right\rangle=:\tilde r_{2,K,\theta}.\] Let us first estimate $\tilde r_{2,K,\theta}$.
We have, up to a multiplicative constant $C$, 
\begin{align*} \int_0^T \Vert \tilde r_{2,K,\theta}\Vert_{L^2(\O)}dt&\leq C\left(\sum_{k=K}a_{K,k}^2\lambda_{k,\mathcal B}\int_0^T t^2e^{-2t\lambda_{k,\mathcal B}} dt\right)^{1/2}
\\&\leq C\left(\sum_{k=K}^\infty \frac{a_{K,k}^2}{\lambda_{K,k}^2}\right)^{\frac12}
\\&=\underset{K\to \infty}o\left(\sqrt{\sum_{k=K}^\infty \frac{a_{K,k}^2}{\lambda_{k,\mathcal B}}}\right).
\end{align*}
Consider now 
\[ r_K:=r_{2,K,\theta}-\tilde r_{2,K,\theta} \] The function $r_K$ satisfies
\[\partial_t r_K-\Delta r_K-qr_K=\underbrace{q\tilde r_{2,K,\theta}+\langle \n \Delta \theta,t\n w_{0,K}\rangle}_{=:J_K}+\underbrace{2t\left(\n^2\theta \odot \n^2 w_{0,K}\right)}_{=:I_K}
\] where $\odot$ denotes the Hadamard product of matrices. Adapting the computation that led to estimating $\tilde r_{2,K,\theta}$ we see that the solution $\beta_{1,K}$ of 
\[ \partial_t \beta_{1,K}-\Delta \beta_{1,K}-q\beta_{1,K}= J_K\] satisfies 
\begin{equation}\label{Eq:EstBeta1K}
\Vert \beta_{1,K}\Vert_{L^2(\OT)}=\underset{K\to \infty}o\left(\sqrt{\sum_{k=K}^\infty \frac{a_{K,k}^2}{\lambda_{k,\mathcal B}}}\right).
 \end{equation}
 Thus the only term that should be estimated is the solution $\tilde r_K$ of 
\[ \partial_t \tilde r_K-\Delta \tilde r_K-q\tilde r_K=t\left(\n^2\theta \odot \n^2 w_{0,K}\right) \] We introduce two last auxiliary functions, namely, 
\[ \tilde r_{3,K,\theta}:=\frac{t^2}2\left(\n^2\theta \odot \n^2 w_{0,K}\right)\,, \tilde T_K:=\tilde r_K-\tilde r_{3,K,\theta}.\]
On the one hand we have 
\[ \partial_t  \tilde T_K-\Delta \tilde T_K-q\tilde T_K=q\tilde r_{3,K,\theta}+\frac{t^2}2  \n^2\Delta \theta\odot \n^2 w_{0,K}+{t^2} \n \n^2 \theta\odot \n^2 \n w_{0,K}.
\]
On the other hand, up to a multiplicative constant $C$, \begin{align*}
\int_0^T t^2\Vert \n^2\theta \odot \n^2 w_{0,K}\Vert_{L^2(\O)}dt&\leq C \int_0^T t^2\Vert \theta\Vert_{\mathscr C^2(\O)}\Vert \n^2 w_{0,K}(t,\cdot)\Vert_{L^2(\O)}dt
\\&\leq C\int_0^T t^2 \Vert \Delta w_{0,K}(t,\cdot)\Vert_{L^2(\O)} dt\text{ by elliptic regularity}
\\&\leq C\left(\int_0^T t^4 \sum_{k=K}^\infty a_{K,k}^2\lambda_{k,\mathcal B}^2e^{-2t\lambda_{k,\mathcal B}}\right)^{\frac12}
\\&\leq C \left(\sum_{k=K}^\infty \frac{a_{K,k}^2}{\lambda_{k,\mathcal B}^3}\right)^{\frac12}
\\&=\underset{K\to \infty}o\left(\sqrt{\sum_{k=K}^\infty \frac{a_{K,k}^2}{\lambda_{k,\mathcal B}}}\right).
\end{align*}
Finally, up to a multiplicative constant, we have
\begin{align*}
\int_0^T t^2 \Vert \n \n^2 \theta\odot \n^2 \n w_{0,K}(t,\cdot)\Vert_{L^2(\O)}dt
&\leq  C\int_0^T t^2 \Vert \Delta \nabla w_{0,K}(t,\cdot)\Vert_{L^2(\O)}^2dt\text{ by elliptic regularity }
\\&\leq C\left(\int_0^T \sum a_{K,k}^2 t^4 \lambda_{k,\mathcal B}^3e^{-2t\lambda_{k,\mathcal B}}dt\right)^{1/2}
\\&\leq C \left(\sum_{k=K}^\infty \frac{a_{K,k}^2}{\lambda_{k,\mathcal B}^2}\right)^{\frac12}
\\&=\underset{K\to \infty}o\left(\sqrt{\sum_{k=K}^\infty \frac{a_{K,k}^2}{\lambda_{k,\mathcal B}}}\right).
\end{align*}
We can hence conclude that 
\begin{equation}\label{Eq:EstR2K}
\Vert r_{2,K,\theta}\Vert_{L^2(\OT)}=\underset{K\to \infty}o\left(\sqrt{\sum_{k=K}^\infty \frac{a_{K,k}^2}{\lambda_{k,\mathcal B}}}\right)
\end{equation} and, thus, that 
\begin{equation}\label{Eq:EstvK}
\Vert v_K-\eta_{0,K,\theta}\Vert_{L^2(\OT)}=\underset{K\to \infty}o\left(\sqrt{\sum_{k=K}^\infty \frac{a_{K,k}^2}{\lambda_{k,\mathcal B}}}\right).\end{equation}
Recall now from Lemma \ref{Le:L2Estimate} that 
\[ \iint_\OT v_K^2\geq c_0\sum_{k=K}^\infty\frac{a_{K,k}^2}{\lambda_{k,\mathcal B}}.\] 
Now let us turn back to the  set $F$, and take any $\phi\in \mathscr C^0([0;T]\times \O)$ such that for any $t$ $\phi(t,\cdot)\in \mathscr C^0_c(F)$. As $\nu_K\,, \nu_\infty\geq 0$, we may take $\phi\geq 0$. Fix a closure point $\nu_\infty$ of the sequence $\left\{\nu_K\right\}_{K\in\N}$. Then
\begin{align*}
\langle \nu_\infty,\phi\rangle&=\lim_{K\to \infty}\langle \nu_K,\phi\rangle
\\&=\lim_{K\to \infty}\frac{\iint_\OT v_K^2 \phi}{\iint_\OT v_K^2}
\\&=\lim_{K\to \infty}\frac{\iint_\OT \eta_{0,K,\theta}^2 \phi+\iint_\OT (v_K-\eta_{0,K,\theta})^2\phi+2\iint_\OT \eta_{0,K,\theta}(v_K-\eta_{0,K,\theta})}{\iint_\OT v_K^2}.
\end{align*}
As $\eta_{0,K,\theta}=\theta w_{0,K}\equiv 0$ on $F$ by the definition of $\theta$, and as $\phi$ is supported in $F$, 
\[ \iint_\OT \eta_{0,K,\theta}^2\phi=0.\] Thus
\begin{align*}
\langle \nu_\infty,\phi\rangle&=\lim_{K\to \infty}\frac{\iint_\OT \eta_{0,K,\theta}^2 \phi+\iint_\OT (v_K-\eta_{0,K,\theta})^2\phi+2\iint_\OT \eta_{0,K,\theta}(v_K-\eta_{0,K,\theta})}{\iint_\OT v_K^2}
\\&=\lim_{K\to \infty}\frac{\iint_\OT (v_K-\eta_{0,K,\theta})^2\phi+2\iint_\OT \eta_{0,K,\theta}(v_K-\eta_{0,K,\theta})}{\iint_\OT v_K^2}
\\&\leq \Vert \phi\Vert_{L^\infty(\OT)}\lim_{K\to \infty}\frac{\Vert v_K-\eta_{0,K,\theta}\Vert_{L^2(\OT)}^2+\Vert \eta_{0,K,\theta}\Vert_{L^2(\O)}\Vert v_K-\eta_{0,K,\theta}\Vert_{L^2(\O)}}{\iint_\OT v_K^2}
\end{align*}
By definition of $\eta_{0,K,\theta}$, 
\[ \Vert \eta_{0,K,\theta}\Vert_{L^2(\OT)}\leq \Vert \theta\Vert_{L^\infty(\O)} \Vert w_{0,K}\Vert_{L^2(\OT)}\leq C \Vert v_K\Vert_{L^2(\OT)}\] for a constant $C$. In the last inequality we used Lemma \ref{Le:L2Estimate}. Combined with Estimate \eqref{Eq:EstvK} this gives
\begin{align*}
\langle \nu_\infty,\phi\rangle&\leq \Vert \phi\Vert_{L^\infty(\OT)}\lim_{K\to \infty}\frac{\Vert v_K-\eta_{0,K,\theta}\Vert_{L^2(\OT)}^2+\Vert \eta_{0,K,\theta}\Vert_{L^2(\O)}\Vert v_K-\eta_{0,K,\theta}\Vert_{L^2(\O)}}{\iint_\OT v_K^2}
\\&=0,
\end{align*}
whence the conclusion.

\end{proof}

\section{Proof of Theorem \ref{Th:Main2}}

The strategy of the proof is to reduce ourselves to the setting of Theorem \ref{Th:LaplaceDimd}. In other words, we need to explain why it is possible to use dirac (in time) type perturbations $h$, so that the equation \eqref{Eq:DotuK} on $\dot u_y$ reduces to a Cauchy problem). To explain why such a construction is possible, we need to give a few words about the optimality conditions for \eqref{Eq:PvIntro2} and the admissible perturbations.

First of all, we argue once again by contradiction and we take $\delta>0$ such that 
\[ \omega^*:=\{-\kappa_0<y^*<\kappa_1\}\cap \{ Z_{y^*}\geq \delta\}\] has positive measure. By inner regularity of the Lebesgue measure we further assume 
\begin{equation}\label{Eq:Closed}
\omega^* \text{ is closed}.
\end{equation}We know that, for any admissible perturbation $h$ at $y^*$ supported in $\omega^*$, we have $\dot J(y^*)[h]=0$. To reach a contradiction, it suffices to construct an admissible perturbation $h$ supported in $\omega^*$ such that  $\ddot J(y^*)[h,h]> 0$.

The main difficulty lies in the structure of the cone of admissible perturbations. This cone, which we denote  $T(y)$ at $y\in \mathcal F$ is defined \cite{CominettiPenot} as follows: $h\in L^1(0,T;L^1(\O))\cap L^\infty(\OT)$ is admissible at $y$ if and only if for any sequence $\{\e_k\}_{k\in \N}$  that converges to 0, there exists a sequence $\{h_k\}_{k\in \N}\in L^1(0,T;L^1(\O))\cap L^\infty(\OT)$ such that $h_k\underset{k\to \infty}\rightarrow h$ in $L^2(0,T;L^2(\O))$ and such that for any $k\in \N$ $y+\e_kh_k\in \mathcal F$. Ideally, we  would choose a perturbation $h$ of the form 
\begin{equation}\label{Eq:Goaly} h=\delta_{t=t_0}h_0(x)\end{equation}  where $t_0\in (0;T)$, for any function $h_0$ with zero mean value and supported in the slice $(\{t=t_0\}\times \O)\cap \omega^*$. Indeed, if the perturbation $h$ is of the form \eqref{Eq:Goaly} the associated $\dot u_{y^*}$ should solve
\begin{equation}\label{Eq:Can}
\begin{cases}
\partial_t \dot u_{y^*}-\Delta \dot u_{y^*}=\partial_uf(t,x,u_{y^*}) \dot u_{y^*}&\text{ in }(t_0;T)\,, 
\\ \mathcal B \dot u_{y^*}=0&\text{ on }(t_0;T)\times \partial \O\,,
\\ \dot u_{y^*}(t=t_0,\cdot)\equiv h_0&\text{ in }\Omega\,, 
\end{cases}\end{equation}
extended by 0 in $(0;t_0)\times \O$ and, provided $\dot u_{y^*}\in L^2(0,T;L^2(\O))$, we would like to say that in that case, if $y^*$ is an optimiser, then
\[ \iint_\OT Z_{y^*}\dot u_{y^*}^2\leq 0.\]In other words, we wish to prove that optimality conditions extend to perturbations $h$ that write as \eqref{Eq:Goaly}, that is, as measures in time. If we can do this, then  we will be able to use Theorem \ref{Th:LaplaceDimd}. Thus we start by proving the following proposition (note that we denote the solutions of equations of the type \eqref{Eq:Can} by $\dot v$, and retain the notation $\dot u$ for the ``standard" notion of Fr\'echet derivative):

\begin{proposition}\label{Pr:ExtensionPerturbations}
For almost every $t_0\in (0;T)$ such that $\omega_{t_0}^*:=(\{t=t_0\}\times \O)\cap \omega$ has positive measure, for any $h_0\in L^2(\O)$ supported in $\omega_{t_0}^*$, extended by zero outside of $\omega_{t_0}^*$ and such that $\int_\O h_0=0$, letting $\dot v$ be the solution of \eqref{Eq:Can} associated with $h_0$, there holds
\[ \iint_\OT Z_{y^*} \dot v^2\leq 0.\]
\end{proposition}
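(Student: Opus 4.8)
The plan is to obtain Proposition~\ref{Pr:ExtensionPerturbations} as a limit of the ordinary second-order optimality conditions applied to a family of genuine (i.e. $L^\infty$-in-time) perturbations that approximate the Dirac mass $\delta_{t=t_0}h_0(x)$. First I would fix a Lebesgue point $t_0\in(0;T)$ of the map $t\mapsto \mathrm{Vol}(\omega^*_t)$ and of the relevant traces, pick $h_0\in L^2(\O)$ supported in $\omega^*_{t_0}$ with $\int_\O h_0=0$, and for $\e>0$ small set $h_\e:=\frac1\e\mathds 1_{(t_0-\e;t_0)}(t)\,h_0(x)$. Since $h_0$ has zero mean and is supported in $\omega^*_{t_0}\subset\omega^*$, and since $t_0$ is a density point, for $\e$ small the perturbation $h_\e$ is (after a negligible modification to keep the support inside $\omega^*$ and the mean zero at each time) an admissible direction: $y^*$ takes values strictly between $-\kappa_0$ and $\kappa_1$ on $\omega^*$, hence $y^*+s h_\e\in\mathcal Y$ for $s$ small, so $h_\e\in T(y^*)$. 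First-order optimality then gives $\dot J(y^*)[h_\e]=0$ and second-order optimality gives $\ddot J(y^*)[h_\e,h_\e]\le 0$, i.e.
\[
\iint_\OT Z_{y^*}\,\dot u_{y^*,h_\e}^2 + \int_\O \partial_u^2 j_2(x,u_{y^*})\,\dot u_{y^*,h_\e}^2(T,\cdot) \le 0,
\]
where $\dot u_{y^*,h_\e}$ solves \eqref{Eq:DotuK} with $h=h_\e$ and $y=y^*$.

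Next I would pass to the limit $\e\to0$. Write $\dot u_{y^*,h_\e}=\psi_\e+\rho_\e$ where $\psi_\e$ solves the homogeneous linearised equation $\partial_t\psi_\e-\Delta\psi_\e=\partial_uf(t,x,u_{y^*})\psi_\e$ with $\psi_\e\equiv 0$ on $(0;t_0-\e)$ and appropriate forcing on $(t_0-\e;t_0)$; the point is that $\frac1\e\mathds 1_{(t_0-\e;t_0)}h_0 \to \delta_{t=t_0}h_0$, so by a standard mild-solution / Duhamel argument $\dot u_{y^*,h_\e}\to \dot v$ strongly in $L^2(\OT)$ and $\dot u_{y^*,h_\e}(T,\cdot)\to \dot v(T,\cdot)$ in $L^2(\O)$, where $\dot v$ is the solution of \eqref{Eq:Can}. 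Here I would use Proposition~\ref{Pr:ParabolicRegularity}-type energy estimates to control $\dot u_{y^*,h_\e}$ uniformly in $\e$ (the $L^2(\O)$ norm of the ``initial datum'' $\int_{t_0-\e}^{t_0}\frac1\e h_0\,dt$ is just $\|h_0\|_{L^2(\O)}$, independent of $\e$), and to identify the limit on $(t_0;T)$ as the Cauchy problem \eqref{Eq:Can}, using that the contribution from $(0;t_0-\e)$ vanishes and the ``layer'' $(t_0-\e;t_0)$ shrinks. Since $Z_{y^*}\in L^\infty(\OT)$ and $\partial_u^2 j_2(\cdot,u_{y^*})\in L^\infty(\O)$, the quadratic forms converge, and passing to the limit in the displayed inequality yields $\iint_\OT Z_{y^*}\dot v^2 + \int_\O \partial_u^2 j_2\,\dot v^2(T,\cdot)\le 0$.

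The remaining point is to discard the terminal term $\int_\O \partial_u^2 j_2(x,u_{y^*})\dot v^2(T,\cdot)$, since the conclusion of the proposition only involves $\iint_\OT Z_{y^*}\dot v^2$. This is where one must be a little careful: one option is to absorb it by a rescaling argument — replace $h_0$ by $h_0$ acting at time $t_0$ but additionally exploit that one may also choose $t_0$ arbitrarily close to $T$ or use highly oscillatory $h_0$ (as will be done via Theorem~\ref{Th:LaplaceDimd}) so that $\dot v(T,\cdot)$ becomes negligible compared to $\iint_\OT\dot v^2$; alternatively, and more cleanly, one notes that in the eventual application $h_0=h_K$ is a high-frequency packet supported in $\omega^*_{t_0}$, for which $\|\dot v(T,\cdot)\|_{L^2(\O)}^2 = o(\iint_\OT \dot v^2)$ by the decay estimates of Lemma~\ref{Le:L2Estimate} (the solution concentrates near $t=t_0$). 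I expect this terminal-term control, together with the rigorous identification of the limit problem across the shrinking time layer $(t_0-\e;t_0)$ — in particular checking that $h_\e$ is genuinely in the tangent cone $T(y^*)$ uniformly and that no boundary-in-time effect is lost — to be the main obstacle; the energy estimates themselves are routine given Proposition~\ref{Pr:ParabolicRegularity}.
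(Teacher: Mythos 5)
Your proposal follows essentially the same route as the paper: approximate $\delta_{t=t_0}h_0$ by admissible perturbations supported on a shrinking time window around $t_0$, apply the second-order necessary condition $\ddot J(y^*)[h_\e,h_\e]\le 0$, and pass to the limit in the linearised state. Three points of comparison. (i) The ``negligible modification'' you defer is precisely where the paper spends its effort: the admissible perturbation is taken to be $h_\e=\frac1\e\mathds 1_{(t_0-\e;t_0+\e)\times\O}\,\mathds 1_{\omega^*}(t,x)\bigl(h_0-\fint_{\omega_t^*}h_0\bigr)$, i.e.\ one must both cut off to the slice $\omega^*_t$ (to respect the box constraints) and re-centre at each time (to respect $\fint_\O y(t,\cdot)=V_0$, which must hold for a.e.\ $t$); showing that this corrected perturbation still converges to $\delta_{t=t_0}h_0$ uses that a.e.\ $t_0$ is an $L^1(\O)$-Lebesgue point of $\mathds 1_{\omega^*}$, i.e.\ $\fint_{t_0-\e}^{t_0+\e}\Vert\mathds 1_{\omega^*}(s,\cdot)-\mathds 1_{\omega^*}(t_0,\cdot)\Vert_{L^1(\O)}\,ds\to0$, which is slightly more than being a Lebesgue point of $t\mapsto\mathrm{Vol}(\omega^*_t)$; this is the actual content of the ``for almost every $t_0$'' in the statement, so you should make it explicit. (ii) For the convergence $\dot u_\e\to\dot v$ in $L^2(\OT)$ the paper does not argue by Duhamel but invokes the Boccardo--Gallou\"et measure-data theory (continuity of the source-to-solution map in $L^q(0,T;W^{1,q}(\O))$ for $q<\frac{d+2}{d+1}$) combined with uniform energy bounds and the Aubin--Lions lemma to upgrade to strong $L^2$ convergence; your Duhamel/energy route is more elementary and is sound here, because $\Vert h_\e\Vert_{L^1(0,T;L^2(\O))}$ is bounded uniformly in $\e$, which is exactly the norm Proposition \ref{Pr:ParabolicRegularity} requires. (iii) Your concern about the terminal term is legitimate but is not a defect of your argument relative to the paper: the paper's statement and proof of the proposition silently drop $\int_\O\partial^2_{uu}j_2\,\dot u_\e^2(T,\cdot)$, while the inequality actually invoked in the proof of Theorem \ref{Th:Main2} (equation \eqref{Eq:In}) retains it and only disposes of it in the concentration limit $K\to\infty$; your plan to carry it along and kill it via the high-frequency decay is the correct resolution and, if anything, more careful than the source.
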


\paragraph{Conclusion of the proof of Theorem \ref{Th:Main2} using Proposition \ref{Pr:ExtensionPerturbations}} Let us show  how the proof of Theorem \ref{Th:Main2}  follows from Proposition \ref{Pr:ExtensionPerturbations}. Fix $t_0$ such that $\mathrm{Vol}(\omega_{t_0}^*)>0$. From the arguments of \cite[Proof of Theorem 1]{MNP2021} we know that, for any $K\in \N$, we may choose $h_K$ supported in $\omega_{t_0}^*$ such that $h_{K}$ writes
\[ h_K=\sum_{k\geq K}a_{K,k}\p_{k,\mathcal B}\,, \sum_{k=K}^\infty a_{K,k}^2=1\,, \int_{\omega_{t_0}^*} h_K=0.\]  Define, for any $K\in \N$, $\dot v_{K,y^*}$ as the solution of \eqref{Eq:Can} associated with $h_K$. From Proposition \ref{Pr:ExtensionPerturbations}, 
\begin{equation}\label{Eq:In} \forall K\in \N\,,\iint_{(0;T)\times \O}Z_{y^*}\dKuu^2+\int_{\O}\partial^2_{uu}j_2|_{u_{y^*}} \dKuu^2=
 \ddot J(y^*)[h_K,h_K]\leq 0.\end{equation} Set now 
 \[ \nu_K:=\frac{\dKuu^2}{\iint_{(0;T)\times \O}\dKuu^2}\] and choose $\nu_\infty$ to be a closure point (in the sense of measures) of $\left\{\nu_K\right\}_{K\in \N}$.  Observe that $\nu_K$ can be considered as a measure in $[t_0;T]\times \overline \O$ as $\dKuu\equiv 0$ in $[0;t_0)$.  As $Z_{y^*}\in \mathscr C^0([t_0;T]\times\overline\O)$ from standard parabolic regularity, dividing \eqref{Eq:In} by $\iint_{(0;T)\times \O}\dKuu^2$ and passing to the limit, we obtain 
 \[\langle \nu_\infty,Z_{y^*}\rangle\leq 0.
 \]  By parabolic regularity, $\partial_u f(t,x,u_{y^*})$ satisfies \eqref{Hyp:q}. We can hence apply Theorem \ref{Th:LaplaceDimd}: $\nu_\infty$ is supported in $\{t=t_0\}\times\omega_{t_0}^*$. As $Z_{y^*}\geq \delta$ on $\omega_{t_0}^*$ and as $\nu_\infty\geq 0$ we have 
 \[ \delta=\delta\langle \nu_\infty,\mathds 1_{[0;T]\times \O}\rangle\leq \langle \nu_\infty,Z_{y^*}\rangle\leq 0,\]
 a contradiction. This concludes the proof.

 Thus, only Proposition \ref{Pr:ExtensionPerturbations} remains to be proved.

\begin{proof}[Proof of Proposition \ref{Pr:ExtensionPerturbations}] 
\textbf{Measure approximation of $\delta_{t=t_0}h_0$}

We know from \cite[Theorem 8.19]{Leoni} that almost every $t\in (0;T)$
is an $L^1(\O)$-Lebesgue point of $\mathds 1_\omega$ in the sense that 
\begin{equation}\label{Eq:Lebesgue}
\lim_{\e \to 0}\fint_{t-\e}^{t+\e}\Vert \mathds 1_\omega(s,\cdot)-\mathds1_\omega(t,\cdot)\Vert_{L^1(\O)}ds=0.\end{equation}
Let $t_0$ be a Lebesgue point such that $\omega_{t_0}^*$ has positive $d$-dimensional measure and let $h_0\in L^2(\O)$. By a standard approximation argument, it suffices to prove the proposition for $h_0\in L^\infty(\O)$. 

Now set, for almost every $s\in (0;T)$, $\omega_s^*:=(\{t=s\}\times \O)\cap \omega^*$ and define, for $\e>0$, 
\begin{equation}\label{Eq:DefHe}h_\e:=\frac1\e\mathds 1_{(t_0-\e;t_0+\e)\times \O}\mathds 1_{\omega^*}(t,x)\left(h_0-\fint_{\omega_t^*}h_0\right)
\end{equation} Clearly $h_\e$ is an admissible perturbation at $y^*$. Furthermore observe that in the sense of measures we have
\begin{equation}\label{Eq:CvMeasure}
h_\e\underset{\e \to 0}\rightharpoonup h_0 
\end{equation} Indeed, for any test function $\Phi\in \mathscr C^0(\OT)$ which me way assume to satisfy $\Vert \Phi\Vert_{L^\infty(\OT)}=1$, we have 
\begin{align*}
\left|\iint_\OT h_\e\Phi-\int_\O h_0\Phi(t_0,\cdot) \right|&\leq \left| \iint_\OT h_\e \Phi(t_0,\cdot)-\int_\O h_0 \Phi(t_0,\cdot) \right|&(=:I_1^\e)
\\&+\left|\iint_\OT h_\e \left(\Phi-\Phi(t_0,\cdot)\right) \right|& (=:I_2^\e)
\end{align*}
By continuity of $\Phi$ and  as $h_\e$ is  (uniformly bounded) Radon measure with support in $(t_0-\e;t_0+\e)\times \O$, we have $I_2^\e\underset{\e \to 0}\rightarrow 0$.
For $I_1^\e$, using $\Vert \Phi\Vert_{L^\infty(\OT)}\leq 1$, we have the estimate
\begin{align*}
I_1^\e&\leq  \fint_{t-\e}^{t+\e} \left|\int_\O h_\e-h_0\right|
\\&\leq \Vert h_0\Vert_{L^\infty(\O)}\fint_{t_0-\e}^{t_0+\e}\Vert \mathds 1_{\omega^*}-\mathds 1_{\omega^*_{t_0}}\Vert_{L^1(\O)}&(=:J_1^\e)
\\&+\fint_{t_0-\e}^{t_0+\e}\left|\fint_{\omega_{t}^*} h_0\right|&(=:J_2^\e)
\end{align*}
$J_1^\e$ converges to 0 as $\e$ converges to zero as $t_0$ was chosen as a Lebesgue point. Furthermore, using the fact that $\fint_{\omega_{t_0}^*}h_0=0$, $J_2^\e$ can be estimated as 
\[ 0\leq J_2^\e\leq \fint_{t_0-\e}^{t_0+\e}\left| \fint_{\omega_{t}^*} h_0-\fint_{\omega_{t_0}^*} h_0\right|\leq \Vert h_0\Vert_{L^\infty(\O)} \fint_{t_0-\e}^{t_0+\e}\Vert \mathds 1_{\omega^*}-\mathds 1_{\omega^*_{t_0}}\Vert_{L^1(\O)}\] which also converges  to $0$ as $t_0$ is a Lebesgue point.

As a consequence, we constructed a sequence of admissible perturbations that converges in the sense of measures to $\delta_{t=t_0}h_0$. To conclude the proof we need to guarantee the convergence of the solutions of \eqref{Eq:DotuK} to $\dot v_y$, the solution of \eqref{Eq:Can}.

\textbf{Convergence of the solutions}

For any $\e>0$ we let $\dot u_\e$ be the solution of \eqref{Eq:DotuK} associated with $h_\e$, and $\dot v_y$ be the solution of \eqref{Eq:Can} associated with $h_0$. Let us show
\begin{equation}\label{Eq:Gallouet}
\Vert \dot u_\e-\dot v_y\Vert_{L^2(0,T;L^2(\O))}\underset{\e \to 0}\rightarrow 0.
\end{equation} 
This suffices to show that 
\[ \iint_\OT Z_{y^*} \dot v_y^2=\lim_{\e \to 0}\iint_\OT Z_{y^*}\dot u_\e^2\leq 0\] and thus provides the conclusion of the proof. \eqref{Eq:Gallouet} follows from two ingredients: one is a general  result of Boccardo \& Gallou\"{e}t \cite[Section IV, Theorem 4]{BoccardoGallouet} that among other things guarantees the well-posedness of the equations at hand, while the second takes advantage of the particular structure of $h_\e$. Let us start with the following theorem:
\begin{theorem*}\label{Th:BGRegu}
\cite[Section IV, Theorem 4]{BoccardoGallouet}
Let $\mathcal M(\OT)$ be the set of Radon measures on $\OT$.
Let $f \in \mathcal M(\OT)$. There exists a unique solution $\theta_f$ to 
\begin{equation}\label{Eq:Theta}
\begin{cases}
\frac{\partial \theta_f}{\partial t}-\Delta \theta_f=f&\text{ in }\OT\,, 
\\ \theta_f(t=0,\cdot)=0\,,
\end{cases}
\end{equation} that further satisfies the following regularity estimates:
\begin{enumerate}
\item $\Vert \theta_f\Vert_{L^\infty(0,T;L^1(\O))}\leq c \Vert f\Vert_{\mathcal M(\OT)}$ for some constant $c=c(\O)$,
\item for any $q\in \left[ 1;\frac{d+2}{d+1}\right)$ there exists a constant $c_q=c_q(\O,T)$ such that 
\[\Vert \theta_f\Vert_{L^q(0,T;W^{1,q}(\O))}\leq c_q\Vert f\Vert_{\mathcal M(\OT)},
\]
\item for any $q\in \left[ 1;\frac{d+2}{d+1}\right)$, the map $f\mapsto \theta_f$ is continuous for the strong $L^q(0,T;W^{1,q}(\O))$ topology on $\theta$.
\end{enumerate}
\end{theorem*}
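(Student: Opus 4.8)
The plan is to reconstruct the Boccardo--Gallou\"et argument, whose heart is an a priori gradient estimate obtained by testing a regularised equation against truncations of the solution. First I would regularise the datum: mollify $f$ in space and time to produce $f_n\in \mathscr C^\infty(\OT)$ with $f_n\rightharpoonup^{*} f$ weakly-$*$ and $\Vert f_n\Vert_{L^1(\OT)}\leq \Vert f\Vert_{\mathcal M(\OT)}$ (mollification does not increase the total mass), and let $\theta_n$ be the classical solution of $\partial_t\theta_n-\Delta\theta_n=f_n$ with $\theta_n(0,\cdot)=0$ and homogeneous Dirichlet data, so that $W^{1,q}$ is understood as $W^{1,q}_0$. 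All estimates will be established uniformly in $n$ and then transferred to the limit.

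Estimate $(1)$ and the building blocks of $(2)$ come from two test functions. Testing against a smooth approximation of $\mathrm{sign}(\theta_n)$ makes the diffusion term nonnegative and yields $\frac{d}{dt}\int_\O|\theta_n(t,\cdot)|\leq \int_\O|f_n(t,\cdot)|$, whence $\sup_t\Vert\theta_n(t,\cdot)\Vert_{L^1(\O)}\leq \Vert f\Vert_{\mathcal M(\OT)}$, i.e. $(1)$. Testing against the truncation $T_k(\theta_n):=\max(-k,\min(k,\theta_n))$ gives, after integrating in time and using $\theta_n(0,\cdot)=0$, the truncated energy bound $\iint_{\{|\theta_n|<k\}}|\n\theta_n|^2\leq k\Vert f\Vert_{\mathcal M(\OT)}$. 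Since $|T_k(\theta_n)|^2\leq k|\theta_n|$, the same test controls $\Vert T_k(\theta_n)\Vert_{L^\infty(0,T;L^2(\O))}^2\leq k\Vert f\Vert_{\mathcal M(\OT)}$, so the parabolic Gagliardo--Nirenberg inequality $\Vert v\Vert_{L^{2(d+2)/d}(\OT)}^{2(d+2)/d}\leq C\Vert v\Vert_{L^\infty(0,T;L^2(\O))}^{4/d}\Vert\n v\Vert_{L^2(\OT)}^2$ applied to $v=T_k(\theta_n)$ produces the level-set decay $|\{|\theta_n|\geq k\}|\leq C\Vert f\Vert_{\mathcal M(\OT)}^{(d+2)/d}k^{-(d+2)/d}$.

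The main obstacle, and the genuinely Boccardo--Gallou\"et step, is converting these two facts into an $L^q$ gradient bound for $q<\frac{d+2}{d+1}$. I would argue on the distribution function of $|\n\theta_n|$: for $\lambda,k>0$, splitting according to whether $|\theta_n|\leq k$ gives $|\{|\n\theta_n|>\lambda\}|\leq \lambda^{-2}\iint_{\{|\theta_n|<k\}}|\n\theta_n|^2+|\{|\theta_n|\geq k\}|\leq k\Vert f\Vert_{\mathcal M(\OT)}\lambda^{-2}+C\Vert f\Vert_{\mathcal M(\OT)}^{(d+2)/d}k^{-(d+2)/d}$, and optimising in $k$ yields $|\{|\n\theta_n|>\lambda\}|\leq C\Vert f\Vert_{\mathcal M(\OT)}^{(d+2)/(d+1)}\lambda^{-(d+2)/(d+1)}$. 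This is precisely a weak-$L^{(d+2)/(d+1)}$ bound on $\n\theta_n$; integrating the distribution function over $\lambda$ (which converges exactly when $q<\frac{d+2}{d+1}$ on the finite-measure domain $\OT$) gives $\Vert\n\theta_n\Vert_{L^q(\OT)}\leq c_q\Vert f\Vert_{\mathcal M(\OT)}$, and the analogous weak-$L^{(d+2)/d}$ bound on $\theta_n$ itself, read off from the level-set decay, controls $\Vert\theta_n\Vert_{L^q(\OT)}$ in the same range, completing the uniform $L^q(0,T;W^{1,q}(\O))$ estimate $(2)$.

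It then remains to pass to the limit, to prove uniqueness and to deduce continuity. The uniform bound $(2)$ gives, up to a subsequence, $\theta_n\rightharpoonup\theta_f$ in $L^q(0,T;W^{1,q}(\O))$; since the datum converges weakly-$*$ and the equation is linear, one passes to the limit in the distributional formulation $-\iint_\OT\theta_n\partial_t\phi+\iint_\OT\n\theta_n\cdot\n\phi=\iint_\OT\phi\,f_n$ for smooth $\phi$ with $\phi(T,\cdot)=0$ and $\mathcal B\phi=0$, obtaining a solution $\theta_f$ that inherits $(1)$--$(2)$ by weak lower semicontinuity (the $L^\infty L^1$ bound being recovered through Aubin--Lions compactness and Fatou). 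Uniqueness is the only place where linearity is essential: if $w$ is the difference of two such solutions it solves the homogeneous problem with zero data, and testing its distributional formulation against the smooth solution $\psi$ of the backward adjoint problem $-\partial_t\psi-\Delta\psi=g$, $\psi(T,\cdot)=0$, $\mathcal B\psi=0$, shows $\iint_\OT wg=0$ for every smooth $g$, hence $w\equiv 0$. Finally, continuity of $f\mapsto\theta_f$ for the strong $L^q(0,T;W^{1,q}(\O))$ topology is then immediate: the map is well defined and linear, so applying $(2)$ to $\theta_f-\theta_{\tilde f}=\theta_{f-\tilde f}$ yields $\Vert\theta_f-\theta_{\tilde f}\Vert_{L^q(0,T;W^{1,q}(\O))}\leq c_q\Vert f-\tilde f\Vert_{\mathcal M(\OT)}$.
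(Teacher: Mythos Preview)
The paper does not prove this theorem: it is quoted as a result of Boccardo and Gallou\"et and invoked as a black box to derive the subsequent lemma (the variant with a bounded potential). There is therefore no in-paper proof to compare your attempt against.

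Your reconstruction of the Boccardo--Gallou\"et argument for items (1) and (2) is the standard one and is correct: the $L^\infty(0,T;L^1(\O))$ bound from testing against $\mathrm{sign}(\theta_n)$, the truncated energy estimate from testing against $T_k(\theta_n)$, the parabolic Gagliardo--Nirenberg step yielding the super-level-set decay, and the optimisation in $k$ producing weak-$L^{(d+2)/(d+1)}$ control of $\nabla\theta_n$ are exactly the ingredients of the original proof.

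One genuine gap concerns item (3). You deduce continuity from linearity plus estimate (2), which only gives Lipschitz continuity with respect to the total-variation norm on $\mathcal M(\OT)$. The paper, however, applies this theorem to pass from $h_\e\rightharpoonup\delta_{t=t_0}h_0$ in the sense of measures (weak-$*$ convergence of a bounded sequence) to strong $L^q(0,T;W^{1,q}(\O))$ convergence of the solutions; this requires sequential continuity from the weak-$*$ topology on bounded sets, and your argument does not supply it, since $f_n\rightharpoonup^{*}f$ does not force $\Vert f_n-f\Vert_{\mathcal M}\to 0$. The actual Boccardo--Gallou\"et proof of this stronger continuity combines Aubin--Lions compactness (giving strong $L^q(L^q)$ convergence of $\theta_n$), strong $L^2(0,T;H^1_0(\O))$ convergence of the truncations $T_k(\theta_n)$, and the uniform-in-$n$ smallness of $\Vert\nabla\theta_n\Vert_{L^q(\{|\theta_n|\geq k\})}$ as $k\to\infty$, the latter being read off precisely from the level-set decay you already established.
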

As we need to apply this regularity result to the solution of an equation with a (bounded) potential we give a more suitable statement, which is just a corollary of Theorem \ref{Th:BGRegu}.
\begin{lemma}\label{Le:Reg}
Let $W\in L^\infty(\OT)$, $f\in \mathcal M(\OT)$.  There exists a unique solution $\eta_f$ to 
\begin{equation}\label{Eq:eta}
\begin{cases}
\frac{\partial \eta_f}{\partial t}-\Delta \eta_f-W\eta_f=f&\text{ in }\OT\,, 
\\ \eta_f(t=0,\cdot)=0\,,
\end{cases}
\end{equation} that further satisfies the following regularity estimates:
\begin{enumerate}
\item $\Vert \eta_f\Vert_{L^\infty(0,T;L^1(\O))}\leq c \Vert f\Vert_{\mathcal M(\OT)}$ for some constant $c=c(\O,\Vert W\Vert_{L^\infty(\OT)})$,
\item for any $q\in \left[ 1;\frac{d+2}{d+1}\right)$ there exists a constant $c_q=c_q(\O,T,\Vert W\Vert_{L^\infty(\OT)})$ such that 
\[\Vert \eta_f\Vert_{L^q(0,T;W^{1,q}(\O))}\leq c_q\Vert f\Vert_{\mathcal M(\OT)},
\]
\item for any $q\in \left[ 1;\frac{d+2}{d+1}\right)$, the map $f\mapsto \eta_f$ is continuous for the strong $L^q(0,T;W^{1,q}(\O))$ topology on $u$.
\end{enumerate}
\end{lemma}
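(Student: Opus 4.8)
The plan is to deduce Lemma~\ref{Le:Reg} from Theorem~\ref{Th:BGRegu} by a fixed-point argument, treating the lower-order term $W\eta_f$ as part of the source. Since $W\in L^\infty(\OT)$ and $L^1(\OT)$ embeds isometrically into $\mathcal M(\OT)$, for any $\eta$ in the Banach space $X:=L^\infty(0,T;L^1(\O))$ the function $W\eta+f$ is again a Radon measure on $\OT$. Denoting by $\theta_g$ the solution furnished by Theorem~\ref{Th:BGRegu} for $g\in\mathcal M(\OT)$, I would introduce the affine map
\[ \Lambda:\ X\ni\eta\ \longmapsto\ \theta_{W\eta+f}\in X, \]
and observe that, by the uniqueness part of Theorem~\ref{Th:BGRegu}, a function $\eta$ solves \eqref{Eq:eta} (in the Boccardo--Gallou\"et sense) if and only if $\Lambda\eta=\eta$. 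So it suffices to show $\Lambda$ has a unique fixed point.

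The core step is to turn $\Lambda$ into a contraction. First I would record a causal form of the first estimate of Theorem~\ref{Th:BGRegu}: by uniqueness, the restriction of $\theta_g$ to $(0;t)\times\O$ coincides with the Boccardo--Gallou\"et solution on that shorter cylinder with source $g\mathds 1_{(0;t)\times\O}$, and since the constant $c=c(\O)$ there does not depend on the length of the time interval, one gets $\Vert\theta_g(t,\cdot)\Vert_{L^1(\O)}\leq c\,\Vert g\Vert_{\mathcal M((0;t)\times\O)}$ for a.e.\ $t\in[0;T]$. Applying this to $g=W(\eta_1-\eta_2)$ (and using linearity of $g\mapsto\theta_g$, so that $\Lambda\eta_1-\Lambda\eta_2=\theta_{W(\eta_1-\eta_2)}$) yields
\[ \Vert(\Lambda\eta_1-\Lambda\eta_2)(t,\cdot)\Vert_{L^1(\O)}\ \leq\ c\,\Vert W\Vert_{L^\infty(\OT)}\int_0^t\Vert\eta_1(s,\cdot)-\eta_2(s,\cdot)\Vert_{L^1(\O)}\,ds. \]
Equipping $X$ with the Bielecki-type norm $\Vert\eta\Vert_\lambda:=\operatorname*{ess\,sup}_{t\in[0;T]}e^{-\lambda t}\Vert\eta(t,\cdot)\Vert_{L^1(\O)}$ (an equivalent norm, so $X$ stays complete) and choosing $\lambda>c\Vert W\Vert_{L^\infty(\OT)}$, this gives $\Vert\Lambda\eta_1-\Lambda\eta_2\Vert_\lambda\leq\frac{c\Vert W\Vert_{L^\infty(\OT)}}{\lambda}\Vert\eta_1-\eta_2\Vert_\lambda$, a strict contraction. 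Banach's theorem then produces the unique $\eta_f$, hence existence and uniqueness for \eqref{Eq:eta}.

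It remains to establish the three estimates. For (1), I would apply the causal bound above to $g=W\eta_f+f$, getting $\Vert\eta_f(t,\cdot)\Vert_{L^1(\O)}\leq c\Vert f\Vert_{\mathcal M(\OT)}+c\Vert W\Vert_{L^\infty(\OT)}\int_0^t\Vert\eta_f(s,\cdot)\Vert_{L^1(\O)}ds$, and conclude by Gr\"onwall's lemma that $\Vert\eta_f\Vert_{L^\infty(0,T;L^1(\O))}\leq c\,e^{cT\Vert W\Vert_{L^\infty(\OT)}}\Vert f\Vert_{\mathcal M(\OT)}$, which is (1). In particular $\Vert W\eta_f\Vert_{\mathcal M(\OT)}=\Vert W\eta_f\Vert_{L^1(\OT)}\leq T\Vert W\Vert_{L^\infty(\OT)}\Vert\eta_f\Vert_{L^\infty(0,T;L^1(\O))}\leq C\Vert f\Vert_{\mathcal M(\OT)}$, so $W\eta_f+f$ is a Radon measure of mass $\lesssim\Vert f\Vert_{\mathcal M(\OT)}$; estimate (2) is then exactly the second estimate of Theorem~\ref{Th:BGRegu} applied to $\eta_f=\theta_{W\eta_f+f}$. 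Finally, estimate (3) is immediate from linearity: uniqueness gives $\eta_{f_n}-\eta_f=\eta_{f_n-f}$, whence $\Vert\eta_{f_n}-\eta_f\Vert_{L^q(0,T;W^{1,q}(\O))}\leq c_q\Vert f_n-f\Vert_{\mathcal M(\OT)}\to0$ for every $q\in[1;\frac{d+2}{d+1})$.

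The one point that needs a small argument is the time-localisation of the Boccardo--Gallou\"et bound with a constant uniform in the interval length — this is what lets the contraction close up on all of $[0;T]$ at once. Once that is granted, everything else is soft: Banach's fixed point theorem, Gr\"onwall's lemma, and linearity of the equation.
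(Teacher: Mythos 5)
Your proof is correct, but it takes a genuinely different route from the paper's. The paper argues by an additive decomposition: it sets $\eta_f=\theta_f+z$, where $\theta_f$ is the Boccardo--Gallou\"et solution with source $f$ and no potential, and $z$ solves the parabolic problem with potential $W$ and the \emph{function} $W\theta_f$ as right-hand side; the measure-type singularity of the data is thus entirely carried by $\theta_f$, and the corrector $z$ is handled by classical parabolic theory rather than by the quoted Boccardo--Gallou\"et theorem. You instead absorb the whole lower-order term into the source and run a Picard iteration, $\Lambda\eta=\theta_{W\eta+f}$, made contractive through a time-localised (``causal'') version of the $L^\infty(0,T;L^1(\O))$ bound together with a Bielecki norm. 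What your approach buys: existence \emph{and} uniqueness of $\eta_f$ come out in a single stroke from Banach's fixed-point theorem (the paper only exhibits one solution and leaves uniqueness implicit), and the Gr\"onwall step yields estimate (1) with an explicit constant $c\,e^{cT\Vert W\Vert_{L^\infty(\OT)}}$. What it costs: you must justify the causal localisation --- that the restriction of $\theta_g$ to $(0;t)\times\O$ is the Boccardo--Gallou\"et solution on the shorter cylinder, with a constant independent of the interval length --- which you rightly single out as the one non-soft point; this is standard but should be checked against the precise solution notion of \cite{BoccardoGallouet}. The paper's route avoids that issue at the price of a delicate point of its own, namely that $W\theta_f$ only lies in $L^q$ for $q<\tfrac{d+2}{d+1}<2$, so the claim that $z$ is an energy ($L^2(0,T;W^{1,2}_0(\O))$) solution needs more care than the paper gives it. Your derivations of estimates (2) and (3) --- feeding $W\eta_f+f\in\mathcal M(\OT)$, with mass controlled by $\Vert f\Vert_{\mathcal M(\OT)}$ via (1), back into the Boccardo--Gallou\"et estimates, and then invoking linearity --- are sound.
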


\begin{proof}[Proof of Lemma \ref{Le:Reg}]
We let $\theta_f$ be the solution of \eqref{Eq:Theta} and we let $z$ be the (unique) $L^2(0,T;W^{1,2}_0(\O))$ solution of 
\begin{equation}\label{Eq:z}
\begin{cases}
\frac{\partial z}{\partial t}-\Delta z-Wz=W\theta_f&\text{ in }\OT\,, 
\\ z_f=0&\text{ on }(0,T)\times \partial \O\,, 
\\ z_f(t=0,\cdot)=0&\text{ in }\O.
\end{cases}
\end{equation} Clearly $z+\theta_f$ is a solution of  \eqref{Eq:eta} and the $L^q(0,T;W^{1,q}(\O))$ estimates on $z$ follow from the estimates of Theorem \ref{Th:BGRegu} and from standard elliptic regularity. The conclusion follows.
\end{proof}
Consequently, we can conclude that 
\begin{equation}\label{Eq:BoccardoGall} \dot u_\e\underset{\e \to 0}\rightarrow \dot v_y\text{ in }L^q(0,T;W^{1,q}(\O))\text{ for any $q<\frac{d+2}{d+1}$}.\end{equation}

Let us now exploit the particular structure of $h_\e$. Noticing that 
\begin{align*}
\forall \e>0\,, \Vert h_\e\Vert_{L^2(0,T;L^2(\O))}&\leq \frac{2\Vert h_0\Vert_{L^\infty(\O)}}\e \int_0^T \mathds 1_{(t_0-\e;t_0+\e)}(s)ds
\\&\leq 2\Vert h_0\Vert_{L^\infty(\O)}.
\end{align*}
Consequently, from standard parabolic estimates, 
\[ \sup_{\e\to 0}\Vert u_\e\Vert_{L^2(0,T;W^{1,2}(\O))}\,, \left\Vert\frac{\partial u_\e}{\partial t}\right\Vert_{L^2(0,T;W^{1,2}(\O)}<\infty\] whence the Aubin-Lions lemma entails that $u_\e$ has a strong $L^2(0,T;L^2(\O))$ closure point. From \eqref{Eq:BoccardoGall} this closure point must be $v_y$, which concludes the proof of \eqref{Eq:Gallouet}.

\end{proof}

%

\bibliographystyle{abbrv}
\bibliography{BiblioTwoScale}
\textsc{Idriss MAZARI-FOUQUER}
\\{CEREMADE, UMR CNRS 7534, Universit\'e Paris-Dauphine PSL, \\Place du Mar\'echal De Lattre De Tassigny, 75775 Paris Cedex 16, France, }
\\ \texttt{mazari@ceremade.dauphine.fr}\medskip
\\ \textsc{ Gr\'egoire Nadin}\\ {Laboratoire Jacques-Louis Lions, UMR  CNRS 7598, Sorbonne Universit\'e,\\  Place Jussieu, 75005, Paris, France }
\\ \texttt{gregoire.nadin@sorbonne-universite.fr}

\end{document}